\def\stnm{{\rm St}(n,m)}
\def\stnk{{\rm St}(n\!+\!1,n\!+\!1\!-\!k)}
\def\sn{\bbs^n}
\def\bbr{{\Bbb R}}
\def\bbh{{\Bbb H}}
\def\bbs{{\Bbb S}}
\def\bbb{{\Bbb B}}
\def\const{{\hbox{\rm const}}}
\def\ker{{\hbox{\rm ker}}}
\def\det{{\hbox{\rm det}}}
\def\rn{\bbr^n}
\def\part{\partial}
\def\intl{\int\limits}
\def\b{\beta}
\def\Lam{\Lambda}
\def\Gam{\Gamma}
\def\a{\alpha}
\def\om{\omega}
\def\del{\delta}
\def\vp{\varphi}
\def\gam{\gamma}
\def\Lam{\Lambda}
\def\sig{\sigma}
\def\z{\zeta}
\def\th{\theta}
\def\e{\varepsilon}
\def\t{\tau}
\def\chi{{\bf 1}}
\def\snm1{\bbs^{n-1}}
\font\frak=eufm10
\def\fr#1{\hbox{\frak #1}}
\def\frM{\fr{M}}
\newtheorem{theorem}{Theorem}[section]
\newtheorem{lemma}[theorem]{Lemma}
\theoremstyle{definition}
\newtheorem{definition}[theorem]{Definition}
\theoremstyle{remark}
\newtheorem{remark}[theorem]{Remark}
\theoremstyle{corollary}
\newtheorem{corollary}[theorem]{Corollary}
\numberwithin{equation}{section}
\newcommand{\be}{\begin{equation}}
\newcommand{\ee}{\end{equation}}
\newcommand{\bea}{\begin{eqnarray}}
\newcommand{\eea}{\end{eqnarray}}
\newcommand{\Bea}{\begin{eqnarray*}}
\newcommand{\Eea}{\end{eqnarray*}}
\def\sideremark#1{\ifvmode\leavevmode\fi\vadjust{\vbox to0pt{\vss% the remark
 \hbox to 0pt{\hskip\hsize\hskip1em%                          will appear only
\vbox{\hsize2cm\tiny\raggedright\pretolerance10000%          on the side
 \noindent #1\hfill}\hss}\vbox to8pt{\vfil}\vss}}}%
\begin{document}

\title[ Non-geodesic  Funk Transforms]
{ Non-geodesic Spherical Funk Transforms with One and Two Centers}

\author{M. Agranovsky and B. Rubin}

\address{Department of Mathematics, Bar Ilan University, Ramat-Gan, 5290002, and Holon Institute of Technology, Holon, 5810201, Israel}
\email{agranovs@math.biu.ac.il }

\address{Department of Mathematics, Louisiana State University, Baton Rouge,
Louisiana 70803, USA}
\email{borisr@math.lsu.edu}

\subjclass[2010]{Primary 44A12; Secondary 37E30}

%\dedicatory{Dedicated to Professor Nikolai
% Vasilevski on the occasion of his 70th anniversary}

%\date{September 25, 2019 }

\maketitle

\begin{abstract}  We study non-geodesic Funk-type transforms  on the unit sphere $\sn$ in $\bbr^{n+1}$ associated with cross-sections of $\sn$ by $k$-dimensional planes  passing through an arbitrary  fixed  point inside the sphere. The main results include  injectivity conditions for these transforms, inversion formulas, and   connection with geodesic Funk transforms. We also show that, unlike the case of planes through a single common center, the  integrals over  spherical sections by planes through two distinct  centers provide the corresponding reconstruction problem a unique solution.
 \end{abstract}

\section {Introduction}

Let $\sn$ be the unit sphere in $\bbr^{n+1}$. Given a point  $a$ inside $\sn$, we denote by  ${\rm Gr}_a (n+1, k)$,  $1\le k\le n$,  the Grassmann manifold of  $k$-dimensional affine planes in $\bbr^{n+1}$ passing through  $a$. The aim of the paper is to study  injectivity of the generalized Funk transform
\be\label{gfu} (F_a f)(\t)= \intl_{\sn \cap \t} f(x)\, d\sig (x), \qquad \t \in {\rm Gr}_a (n+1, k),\ee
and obtain inversion formulas for $F_a$ in suitable classes of functions.

  The classical case $F_a=F_o$, when  $a=o$ is the origin, goes back to the pioneering works by Funk  \cite{Fu11, Fu13}  ($n=2$), which were inspired by  Minkowski \cite{Min}. A generalization of the Funk transform $F_o$ to
   arbitrary $1\le k\le n$  is due to Helgason \cite{H90}; see also \cite {H11, Ru02b, Ru15} and references therein. Operators of this kind play an important role in  convex geometry, spherical tomography, and various branches of Analysis \cite{Ga, GGG1, GRS, Ru15, P1, P2}.

   The case  when   $a$ differs from the origin is relatively new in  modern literature, though Funk-type transforms on $\bbs^2$ for noncentral plane sections were considered by   Gindikin,  Reeds, and   Shepp \cite{GRS} in the framework of the kappa-operator theory.
   One should also mention  non-geodesic Funk-type transforms studied by
    Palamodov \cite[Section 5.2]{P2}. Inversion formulas for these transforms were obtained  in terms of delta functions and differential forms. Operators (\ref{gfu}) with $a\neq o$ are  non-geodesic too, however,  they differ from those in  \cite{P2}. In particular, they are  non-injective.  Non-geodesic Funk-type transforms over subspheres of fixed radius were studied by the second co-author in \cite{Ru00a}, where the results fall into the scope of number theory.

 The case $a \neq o$ in (\ref{gfu}) with $k=n$ was considered by Salman; see \cite{Sa16} for $n=2$ and \cite{Sa17} for any $n\ge 2$. To avoid non-uniqueness, he imposed  restriction on the support of the functions that makes his operator different from ours.  The stereographic projection method of \cite{Sa16, Sa17}  makes it possible
to reduce inversion of Salman's operator to the similar problem for a certain Radon-like transform over spheres in $\rn$.

  The next step was due to Quellmalz \cite{Q} for $n=2$, who   expressed $F_a$ through the totally geodesic Funk transform $F_o$ and thus explicitly inverted this operator on a certain  subclass of continuous functions. If $a=o$ this subclass consists of even functions on $\sn$. The results from \cite{Q} were generalized by Quellmalz \cite{Q1} and Rubin \cite{Ru18} to any $n\ge 2$ with $k=n$. The paper \cite{Ru18} also contains an alternative  inversion method for Salman's operators.

   Our aim in the present article is two-fold. First, we   characterize the kernel (the null subspace) of $F_a$ and the subclass of all  continuous functions on which $F_a$ is injective. We also
   obtain  inversion formulas for $F_a$ on that subclass for {\it any}  $1\le k\le n$ and thus generalize the  corresponding results from \cite{Ru18}.

   Second,  to achieve  uniqueness in the reconstruction problem, we consider sections by planes through {\it two distinct centers}. To the best of our knowledge, this approach is new.  We shall prove  that for any pair of distinct  points $a$ and $b$ inside the sphere, the kernels of the corresponding transforms $F_a$ and $F_b$ have  trivial intersection. The latter means that, unlike the case of a single center,  the collection of data from two distinct centers provides  the  reconstruction problem a unique solution.    We also develop an analytic procedure of the reconstruction, that reduces to a certain dynamical system on $\sn$.

   The results of this paper extend to the case  when the point $a$ lies outside $\sn$, and to  arbitrary pairs of  distinct centers $a, b$ in $\bbr^{n+1}$. We plan to address these cases elsewhere.

   \vskip 0.2 truecm

 \noindent {\bf Plan of the Paper.}
 Section 2 contains notation and necessary preliminaries related to  M\"obius-type automorphisms of the sphere.   In Section 3 we   describe the kernel  of  the operator $F_a$ on  continuous functions and characterize the subclass of functions on which  $F_a$ is injective. We also obtain an explicit inversion formula for $F_a$ on that subclass.  Section 4 deals with the system of two equations,  $F_a f=g$, $F_b f=h$, corresponding to distinct centers $a$ and $b$ inside the sphere. Unlike the case of a single common center, such a system determines $f$ uniquely and the function $f$ can be  reconstructed by a certain pointwise convergent  series.  Norm convergence of this series is studied in Section 5. It turns out that the series does not converge uniformly on the entire sphere $\sn$ (only on some compact subsets of $\sn$),
 however, it converges in the $L^p (\sn)$-norm for all $1\le p\le p_0$, $p_0=n/(k-1)$,  and this bound is sharp. In Section 6 we  prove   Theorem  \ref{kdndt}, which was formulated without proof in Section 3. This theorem  plays a key role in the paper. It states   that the shifted  transform $F_a$ is represented  as $F_a=N_a F_o M_a$, where  $N_a$ and $M_a$ are the suitable  bijections and $F_o$ is the classical Funk transform corresponding to $a=o$.

The main results  are contained  in Theorems  \ref{kjy2}, \ref{klut}, \ref{L:uniformly}, and \ref{T:Lp}.

%\noindent {\bf Acknowledgement.}  The authors are thankful to the referee for his valuable remarks and suggestions.

\section {Preliminaries}

\subsection {Notation}

In the following,
$\bbb^{n+1}=\{x\in \bbr^{n+1}: |x|<1\} $ is the open unit ball in $\bbr^{n+1}$, $\sn$ is its boundary,  $x\cdot y$ is the usual dot product.
The notation  $C(\sn)$ and $L^p (\sn)$ for the  corresponding spaces of continuous and $L^p$ functions on  $\sn$   is standard. If $x$ is the variable of integration over $\sn$, then $dx$ stands for the  $O(n+1)$-invariant surface area measure  on $\sn$, so that  $\int_{\sn} dx =2\pi^{(n+1)/2}/\Gam ((n+1)/2)$. We write $d\sig (x)$ for  the induced surface area measure on lower dimensional spherical sections. The letter $x$ can be replaced by another one, depending on the context.

We denote by $\frM_{n,m}$ the space of real matrices having $n$ rows and $m$  columns;
  $\mathrm{M}'$ is the transpose of the matrix $\mathrm{M}$, $\mathrm{I}_m$ is the identity $m \times m$
  matrix.  For $n\geq m$,  $\stnm= \{\mathrm{M} \in \frM_{n,m}: \mathrm{M}'\mathrm{M}=\mathrm{I}_m \}$
 denotes the Stiefel manifold of orthonormal $m$-frames in $\bbr^n$; ${\rm Gr}_a (n, m)$ is the Grassmann manifold of  $m$-dimensional affine planes in $\bbr^{n}$ passing through a fixed point $a$.
 We will be mainly dealing with the  manifolds $\stnk$,  ${\rm Gr}_a (n+1, k)$,  and ${\rm Gr}_o (n+1, k)$ (i.e. $a=o$),  $1\le k\le n$. Given a frame $\xi \in \stnk$, the notation $\xi^\perp $ stands for the $k$-dimensional linear subspace orthogonal to $\xi$; $\{\xi\}$ denotes  an $(n+1-k)$-dimensional linear subspace spanned by $\xi$. All points in $\bbr^{n+1}$ are identified with the corresponding column vectors.

\subsection {Spherical Automorphisms}

We recall some basic facts; see, e.g.,   Rudin
\cite[Section 2.2.1)]{Rud}, Stoll \cite[Section 2.1]{St}. Given a point $a\in \bbb^{n+1}\setminus \{o\}$, we denote by $\mathrm {P}_a$  and $\mathrm {Q}_a=\mathrm {I}_{n+1} -\mathrm {P}_a$  the orthogonal projections of $\bbr^{n+1}$ onto the direction of $a$ and  the subspace $a^\perp$, respectively.  If $x\in  \bbr^{n+1}$, then
 \[\mathrm {P}_a  x = \frac {a\cdot x}{|a|^2}\,a.\]
Let
 \be\label{E:g_a}
\vp_a x=\frac{a-\mathrm {P}_a x - s_a \mathrm {Q}_a x }{1-x\cdot a}, \qquad s_a=\sqrt{1-|a|^2},\ee
which  is a one-to-one M\"obius transformation satisfying
\be\label{Evf} \vp_a (o)=a, \qquad \vp_a (a)=o, \qquad  \vp_a (\vp_a x)=x,\ee
\be\label{Evf1}
1-|\vp_a x|^2=\frac{(1-|a|^2)(1-|x|^2)}{(1-x\cdot a)^2}, \qquad x\cdot a \neq 1.\ee
If $x\in \sn$, then
\be\label{wqvE:g_a5}
\frac{1-a\cdot \vp_a x}{1+ a\cdot \vp_a x}=\frac{1-|a|^2}{|a-x|^2}.\ee
Properties (\ref {Evf})-(\ref{Evf1}) can be checked by straightforward computation. By (\ref{Evf1}), $\vp_a$  maps the ball $ \bbb^{n+1}$ onto itself and  preserves $\sn$.

\begin{remark}
It is known that the ball $ \bbb^{n+1}$ with the relevant metric can be considered as the Poincar\'e model of the real $(n+1)$-dimensional hyperbolic space $\bbh^{n+1}$.  There is an intimate connection between the M\"obius transformations of $ \bbb^{n+1}$ and the
 group $O(1, n+1)$  in  the hyperboloid model of $\bbh^{n+1}$. In the present article we do not exploit this connection. An interested reader may be referred, e.g., to Beardon \cite [Section 3.7]{B},
   Gehring,  Martin,  Palka \cite [Section 3.7] {GMP},
   Mostow \cite[Theorem 1.1]{Mo}.
\end{remark}

\begin{lemma} \label{lem31}  For any $f\in L^1 (\sn)$,
\be\label{fff}\intl_{\sn} f(x)\, dx= s_a^n \intl_{\sn} \frac{(f\circ \vp_a)(y)}
{(1-  a \cdot y)^n}\, dy, \qquad s_a=\sqrt{1-|a|^2}.\ee
\end{lemma}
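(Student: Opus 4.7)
The plan is to establish (\ref{fff}) as the change of variables formula for the substitution $x=\vp_a(y)$ on $\sn$. Because $\vp_a$ is an involution by (\ref{Evf}), this substitution is well-defined, and the task reduces to identifying the Jacobian of the diffeomorphism $\vp_a:\sn\to\sn$ as $s_a^n/(1-a\cdot y)^n$. The integral formula then follows immediately by pushing $f(x)\,dx$ through this substitution.

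First, I would reduce to the case $a=te_1$ with $t=|a|\in(0,1)$; the case $a=o$ is trivial since $\vp_o(x)=-x$ and $s_o=1$. For any $R\in O(n+1)$ with $Ra'=a$, $a'=te_1$, a direct calculation from (\ref{E:g_a}) yields $\vp_a(x)=R\vp_{a'}(R^{-1}x)$, and combined with the $O(n+1)$-invariance of $dy$, the identity (\ref{fff}) for $(\tilde f, a')$ with $\tilde f=f\circ R$ implies it for $(f,a)$. I would then introduce polar coordinates $y=\cos\theta\cdot e_1+\sin\theta\cdot u$, $\theta\in[0,\pi]$, $u\in\bbs^{n-1}\subset e_1^\perp$, so that $dy=\sin^{n-1}\theta\,d\theta\,du$ and $a\cdot y=t\cos\theta$. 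Computing from (\ref{E:g_a}) gives
\[
\vp_a(y)=\frac{(t-\cos\theta)e_1-s_a\sin\theta\cdot u}{1-t\cos\theta}=\cos\phi\cdot e_1+\sin\phi\cdot(-u),
\]
with $\cos\phi=(t-\cos\theta)/(1-t\cos\theta)$ and $\sin\phi=s_a\sin\theta/(1-t\cos\theta)\ge 0$. Implicit differentiation of $\cos\phi\,(1-t\cos\theta)=t-\cos\theta$, together with $1-t^2=s_a^2$, yields $|d\phi/d\theta|=s_a/(1-t\cos\theta)$.

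Putting the pieces together, the image volume element becomes
\[
dx=\sin^{n-1}\phi\,d\phi\,du=\left(\frac{s_a\sin\theta}{1-t\cos\theta}\right)^{n-1}\!\cdot\frac{s_a}{1-t\cos\theta}\,d\theta\,du=\frac{s_a^n}{(1-a\cdot y)^n}\,dy,
\]
and integrating $f$ against this identity will produce (\ref{fff}). The main obstacle is more conceptual than technical: it is a trap worth avoiding. One might try to read the Jacobian off the hyperbolic-metric invariance of $\vp_a$, but $\vp_a$ viewed as a map of $\bbr^{n+1}$ fails to be conformal at points of $\sn$. Its differential there has distinct singular values in the normal and tangential directions, and it is the tangential factor $s_a/(1-a\cdot y)$ (the square root of the interior conformal factor $s_a^2/(1-a\cdot y)^2$) that governs the restricted Jacobian. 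The polar-coordinate computation sidesteps this subtlety by working intrinsically on $\sn$.
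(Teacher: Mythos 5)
Your proposal is correct and follows essentially the same route as the paper: both reduce to polar coordinates about the axis through $a$ (you write $u=\cos\theta$ explicitly, the paper works with $u$ directly) and compute the one-dimensional Jacobian of the induced fractional-linear map $u\mapsto(|a|-u)/(1-|a|u)$ on the latitude variable, the equatorial directions contributing only a measure-preserving reflection. Your closing caveat that $\vp_a$ in the form (\ref{E:g_a}) is not conformal on $\bbr^{n+1}$, so the Jacobian cannot be read off a conformal factor, is accurate and worth keeping in mind, but the argument itself matches the paper's.
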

\begin{proof} We write $x$  in spherical coordinates
\[ x =\sqrt {1-u^2} \, \th +u \tilde a, \qquad \tilde a=\frac{a}{|a|}, \quad |u|\le 1, \quad \th \in \sn \cap a^\perp,\]
to obtain
\be \label{cnhos} \intl_{\sn} \!f(x)\, dx\!= \!\intl_{-1}^1  (1\!-\!u^2)^{(n-2)/2}\,
 du\!\intl_{\sn \cap \,a^\perp} \!\!f \left( \sqrt{1\!-\!u^2}\, \th \!+\!u\, \tilde a\right)\,d\th.\ee
 By (\ref{E:g_a}),
\be\label{ouq} \vp_a x= -\sqrt {1-v^2} \, \th +v \tilde a, \qquad v =\frac{|a|-u}{1-|a|u}.\ee
 Note that the map $u\to v$ is an involution. Changing
  variable
 \[ u=\frac{|a|-v}{1-|a|v}\]
and taking into account that
 \[ \frac{du}{dv}=\frac{|a|^2-1}{(1-|a|v)^2}, \qquad 1-u^2= \frac{(1-|a|^2)(1-v^2)}{(1-|a|v)^2}, \]
we have
\bea
\intl_{\sn} f(x)\, dx &=&
(1-|a|^2)^{n/2} \intl_{-1}^1  \frac{(1\!-\!v^2)^{(n-2)/2}}{(1-|a|v)^n}\, dv \nonumber\\
&\times& \intl_{\sn \cap \,a^\perp}  \!\!f \left(  \frac{\sqrt{1-|a|^2} \, \sqrt{1-v^2}}{1-|a|v}\, \th +\frac{|a|-v}{1-|a|v}\,\tilde a\right )\, d\th\nonumber\\
 &=&
 s_a^n  \intl_{\sn} \frac{(f\circ \vp_a)(y)}{(1-  a \cdot y)^n} \, dy, \quad \text{\rm as desired}.\nonumber\eea
\end{proof}

We also define the reflection  $\tau_a: \,\sn \to \sn$ about the point $a\in \bbb^{n+1}$:
\be\label {lab2in} \tau_a x=\frac{(|a|^2 -1)\, x + 2(1-x\cdot a)\, a }{|x -a|^2}.\ee
It assigns to $x\in \sn$ the antipodal point $\tau_a x \in \sn$ that lies on the line passing through $x$ and $a$. A similar reflection map about the origin $o$ is denoted by $\tau_o$, so that  $\tau_o x=-x$.

The map $\vp_a$ intertwines  reflections $\tau_a$ and $ \tau_o$, that is,
\be\label{vE:g_a5}
 \vp_a \tau_a=\tau_o \vp_a .\ee
Indeed, $\vp_a$ maps  chords of the ball onto chords. Hence,
for any $ x\in \sn$,  the segment $[x,\tau_a x]$ is mapped onto the segment
$[\vp_a x, \vp_a\tau_a x]$. Since the first segment contains $a$, the second one contains $\vp_a(a)=o$. The latter means that the
points $\vp_a x$ and $ \vp_a\tau_a x $  are
symmetric with respect  to the origin, that is, $\vp_a \tau_a x=\tau_o \vp_a x$.

\begin{lemma} \label{lem312}  If $f\in L^1 (\sn)$ and  $a\in \bbb^{n+1}$, then
\be\label{fffza}\intl_{\sn} f(\tau_a x)\, dx= \intl_{\sn} f(x)\, \left(\frac{1-|a|^2}{|a-x|^{2}}\right )^n\, dx,
\ee
\be\label{fffzas}\intl_{\sn} f(x)\, dx= \intl_{\sn} f(\tau_a x)\, \left(\frac{1-|a|^2}{|a-x|^{2}}\right )^n\, dx.
\ee
\end{lemma}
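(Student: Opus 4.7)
The plan is to reduce everything to Lemma~\ref{lem31} by exploiting the intertwining relation~(\ref{vE:g_a5}), which together with the involution property $\vp_a(\vp_a x)=x$ yields the factorization $\tau_a = \vp_a \tau_o \vp_a$. Applying Lemma~\ref{lem31} with $f\circ \tau_a$ in place of $f$, then using $\tau_a \vp_a = \vp_a \tau_o$ and the measure-preserving change of variable $y\mapsto -y$ on $\sn$, I would first obtain
\[ \intl_{\sn} f(\tau_a x)\, dx = s_a^n \intl_{\sn} \frac{f(\vp_a y)}{(1+a\cdot y)^n}\, dy.\]

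Next I would apply Lemma~\ref{lem31} a second time to the function $y\mapsto f(\vp_a y)/(1+a\cdot y)^n$. The involution of $\vp_a$ reduces $f(\vp_a\vp_a z)$ to $f(z)$, while the lemma contributes another factor $s_a^n$ and a denominator $(1-a\cdot z)^n$. The missing ingredient is the companion identity
\[ 1 + a\cdot \vp_a z = \frac{|a-z|^{2}}{1-a\cdot z}, \qquad z\in \sn,\]
which I would derive by a short direct computation from~(\ref{E:g_a}) — one finds $a\cdot \vp_a z=(|a|^2-a\cdot z)/(1-a\cdot z)$ and then expands $|a-z|^2=|a|^2+1-2a\cdot z$ — or equivalently by combining~(\ref{wqvE:g_a5}) with the parallel formula $1 - a\cdot \vp_a z = (1-|a|^2)/(1-a\cdot z)$. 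After this substitution the product $(1+a\cdot \vp_a z)^n(1-a\cdot z)^n$ simplifies to $|a-z|^{2n}$; since $s_a^{2n}=(1-|a|^2)^n$, identity~(\ref{fffza}) emerges.

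Identity~(\ref{fffzas}) is then immediate from~(\ref{fffza}) by replacing $f$ with $f\circ \tau_a$ and invoking $\tau_a\circ\tau_a = \mathrm{id}$. The main place to be careful is the sequencing: the two applications of Lemma~\ref{lem31} must sandwich the antipodal change of variable, and the companion identity for $1+a\cdot \vp_a z$ must be in hand at the right moment so that the two denominators $(1+a\cdot \vp_a z)^n$ and $(1-a\cdot z)^n$ combine cleanly. No single step is technically deep, but the bookkeeping of Jacobian factors must line up exactly.
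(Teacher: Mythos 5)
Your argument is correct and follows essentially the same route as the paper: both exploit the factorization $\tau_a=\vp_a\tau_o\vp_a$, apply Lemma~\ref{lem31} twice around the antipodal change of variable, and conclude via the identity $\frac{1-a\cdot\vp_a x}{1+a\cdot\vp_a x}=\frac{1-|a|^2}{|a-x|^2}$ (your companion formula for $1+a\cdot\vp_a z$ is just the expanded form of (\ref{wqvE:g_a5})), with (\ref{fffzas}) obtained from (\ref{fffza}) exactly as in the paper. The only cosmetic difference is that the paper packages the second application of Lemma~\ref{lem31} as a reverse reading of (\ref{fff}) rather than a forward one.
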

\begin{proof}  By (\ref{vE:g_a5}) and (\ref{fff}),
\bea \intl_{\sn} f(\tau_a x)\, dx &=&\intl_{\sn} f(\vp_a \tau_o \vp_a x)\, dx \qquad \text {\rm (set $x= \vp_a \tau_o y$)}\nonumber\\
&=&(1-|a|^2)^{n/2}\intl_{\sn}  \frac{(f\circ \vp_a)(y)}
{(1-a \cdot y)^n}\,   \left (\frac{1-a \cdot y}
{1+a \cdot y}\right )^n\,  dy\nonumber\\
 &=&\intl_{\sn} f(x)\,\left (\frac{1-a \cdot \vp_a x}
{1+a \cdot \vp_a x}\right )^n\,  dx.\nonumber\eea
It remains to apply (\ref{wqvE:g_a5}).
The second equality follows from the first one: just replace  $f(x)$ by  $f(\tau_a x)$ and use $\tau_a \tau_a x =x$.
\end{proof}

\section{The Shifted Funk Transform}

\subsection {Inversion Procedure}  The  following theorem establishes connection between
 the shifted  Funk transform
  \be\label{fual} (F_a f)(\t) = \intl_{\sn \cap \,\t} f(x)\, d\sig (x), \qquad \t \in {\rm Gr}_a (n+1, k), \ee
  and the classical  Funk transform $F_o=F_a\vert_{a=o}$ that takes functions on $\sn$ to functions on  ${\rm Gr}_o (n+1, k)$. Given a function $f$ on $\sn$ and a function $\Phi$ on  ${\rm Gr}_o (n+1, k)$, we denote
\be\label{vt1de}
(M_af)(y)\!=\!\left ( \frac {s_a}{1\!-\!a\cdot  y}\right )^{k-1}\!\!(f\circ \vp_a)(y), \quad (N_a\Phi)(\t)= \Phi (\vp_a \t), \ee
where $s_a=\sqrt{1-|a|^2}$ and $\vp_a$ is an automorphism (\ref{E:g_a}).
\begin{theorem}\label {kdndt} Let $1\le k\le n$, $a\in \bbb^{n+1}$.
  If  $f\in C(\sn)$,   then
\be\label{vt1ata}
F_af=N_a F_o M_a f. \ee
\end{theorem}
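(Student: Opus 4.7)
My plan is to perform a change of variable $x = \vp_a(y)$ in the integral defining $(F_a f)(\tau)$ and to exploit the fact that $\vp_a$ is a conformal involution of $\sn$ which carries cross-sections by $k$-planes through $a$ to cross-sections by $k$-planes through the origin. Fix $\tau \in \grank$. Since $\vp_a$ is a M\"obius transformation of the ball, it sends chords of $\bbb^{n+1}$ to chords, and every chord of $\bbb^{n+1}$ contained in $\tau$ that passes through $a$ is therefore sent to a chord passing through $\vp_a(a)=o$ (this is precisely the argument already used to derive (\ref{vE:g_a5})). Consequently $\vp_a$ maps $\sn \cap \tau$ bijectively onto a $(k-1)$-dimensional subsphere of $\sn$ that lies in a $k$-plane through the origin, and this $k$-plane is precisely the element $\vp_a\tau \in {\rm Gr}_o(n+1,k)$ used in the definition of $N_a$ in (\ref{vt1de}).

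Next I would identify the Jacobian of $\vp_a$ restricted to such a $(k-1)$-subsphere. As the restriction to $\sn$ of a M\"obius transformation, the map $\vp_a\colon\sn\to\sn$ is conformal, so its differential at $y$ is an isotropic dilation by some factor $\lambda(y)$, and the induced Jacobian on any $m$-dimensional submanifold of $\sn$ is therefore $\lambda(y)^m$. Applying Lemma \ref{lem31} with $m=n$ identifies $\lambda(y)^n = s_a^n (1-a\cdot y)^{-n}$, whence $\lambda(y) = s_a/(1-a\cdot y)$ and $\lambda(y)^{k-1} = (s_a/(1-a\cdot y))^{k-1}$.

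Setting $\tau_0 = \vp_a\tau$ and performing the change of variable $x=\vp_a(y)$ in (\ref{fual}), the two preceding steps combine to give
\[
(F_a f)(\tau) = \intl_{\sn\cap\tau_0} (f\circ\vp_a)(y)\left(\frac{s_a}{1-a\cdot y}\right)^{k-1} d\sig(y) = \intl_{\sn\cap\tau_0}(M_a f)(y)\,d\sig(y),
\]
which by the definition of $F_o$ and $N_a$ equals $(F_o M_a f)(\tau_0) = (F_o M_a f)(\vp_a\tau) = (N_a F_o M_a f)(\tau)$. The main obstacle will be making rigorous the conformal scaling law on arbitrary $(k-1)$-dimensional submanifolds of $\sn$ (in particular, ensuring measurability of the family $\{\vp_a\tau\}_\tau$ and the pointwise validity of the Jacobian on every subsphere). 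To sidestep any appeal to general conformal geometry, an elementary but somewhat laborious alternative is to parameterize $\sn \cap \tau$ by adapted spherical coordinates in the spirit of the proof of Lemma \ref{lem31} and track the Jacobian directly from the explicit formula (\ref{E:g_a}) for $\vp_a$.
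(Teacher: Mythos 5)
Your argument is correct, but it takes a genuinely different route from the paper's. The paper proves Theorem \ref{kdndt} by regularization: it replaces the section integral by the full-sphere integral $(F_{a,\e}f)(\xi)=\int_{\sn}f(x)\,\om_\e(\xi'(x-a))\,dx$ against a bump function, evaluates the limit as $\e\to 0$ in two ways --- once directly in bispherical coordinates (Step I), and once after the global change of variables of Lemma \ref{lem31}, which then needs a polar decomposition of frames and an implicit-function-theorem argument to linearize the argument of $\om_\e$ (Step II) --- and equates the two answers. That route deliberately avoids ever differentiating $\vp_a$ along a lower-dimensional submanifold: the only Jacobian it uses is the $n$-dimensional one of Lemma \ref{lem31}. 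You instead change variables directly on the $(k-1)$-dimensional section, which is much shorter and more transparent, but it shifts the burden onto two facts the paper's proof does not invoke: (i) that $\vp_a$ carries $\sn\cap\tau$ onto the central section $\sn\cap\vp_a\tau$ (your chord argument produces a $(k-1)$-subsphere symmetric about $o$, hence central; the paper's Lemma \ref{lemuupz} establishes the same thing with explicit frames), and (ii) the conformality of $\vp_a|_{\sn}$, which is what permits passing from the $n$-dimensional Jacobian of Lemma \ref{lem31} to the scalar factor $\lambda(y)=s_a/(1-a\cdot y)$ and thence to the $(k-1)$-dimensional Jacobian $\lambda(y)^{k-1}$. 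Point (ii) is the one step where something external must be imported --- Lemma \ref{lem31} by itself does not show that the differential of $\vp_a|_{\sn}$ is an isotropic dilation --- but it is standard and can be verified without any general conformal geometry from the two-point identity $|\vp_a x-\vp_a y|^2=(1-|a|^2)\,|x-y|^2/\big((1-a\cdot x)(1-a\cdot y)\big)$ for $x,y\in\sn$ (the polarized form of (\ref{Evf1})), which exhibits $\lambda(y)$ directly as the metric distortion of $\vp_a|_{\sn}$ at $y$ and renders the detour through Lemma \ref{lem31} unnecessary. With that supplied, your computation yields (\ref{vt1ata}) exactly, covers the case $k=1$ trivially (the factor $\lambda^{k-1}$ is then $1$, matching Remark \ref{klut}), and has the added virtue of explaining conceptually where the weight $(s_a/(1-a\cdot y))^{k-1}$ in $M_a$ comes from, which the paper's computation only produces at the end of Step II.
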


The  proof of  this theorem is given in Section \ref{jsbsr}.

The  Funk transform $F_o$   is injective on the subspace $C^{+} (\sn)$ of even functions, whilst the subspace $C^{-} (\sn)$ of  odd functions  is the kernel of $F_o$ in $C(\sn)$; see, e.g.,   \cite {H11, Ru02b, Ru13b, Ru15}. We denote by $\tilde F_o$ the restriction of $F_o$ onto  $C^{+} (\sn)$.

There exist several
different approaches to inversion of $\tilde F_o$. We recall one of them.  Given   $\vp=\tilde F_o f$,  $f\in C^{+} (\sn) $, consider the mean value operator
\be\label{liuh4}
(F_x^* \vp)(r)=\intl_{\{\z  \in {\rm Gr}_o (n+1, k): \,d(x,\z)=r\}} \vp(\z)\, d m (\z), \qquad 0<r<1,\ee
where integration is performed with respect to the relevant probability measure over the set of all planes
$\z  \in {\rm Gr}_o (n+1, k)$ at geodesic distance $d(x,\z)=\cos^{-1} r$ from $x$.

\begin{theorem}\label{invrhys} {\rm (cf. \cite[Theorem 5.3]{Ru13b})}
A function $f\in C^{+} (\sn)$  can be reconstructed from $\vp=\tilde F_o f$ by
\bea\label{90ashel}
f(x)&\equiv&(\tilde F_o^{-1} \vp)(x) \\
&=&\lim\limits_{s\to 1}  \left (\frac {1}{2s}\,\frac {\partial}{\partial s}\right )^k \Bigg [\frac{\pi^{-k/2}}{\Gam (k/2)}\intl_0^s
(s^2 \!- \!r^2)^{k/2-1} \,(F^*_{x} \vp)(r) \,r^k\,dr\Bigg ].\nonumber\eea
In particular, for $k$ even,
\be\label{90ashele}
(\tilde F_o^{-1} \vp)(x) \!=  \! \lim\limits_{s\to 1} \frac{1}{2\pi^{k/2}} \left (\frac {1}{2s}\,\frac {\partial}{\partial s}\right )^{k/2}[s^{k-1}(F^*_{x} \vp) (s)].\ee
The  limit in these formulas is understood in the  $\sup$-norm.
\end {theorem}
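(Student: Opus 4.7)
The plan is to reduce the inversion to a one-dimensional Riemann--Liouville (Abel) fractional integral identity and then apply the standard Abel inversion. First I would exploit rotational invariance: for fixed $x\in\sn$, both $\tilde F_o$ and the mean value operator $F_x^*$ are invariant under the orthogonal stabilizer of $x$, so $(F_x^*\tilde F_o f)(r)$ depends on $f$ only through its standard spherical means
$$
(M_\omega f)(x)=\frac{1}{|\sn\cap x^\perp|}\intl_{\sn\cap x^\perp}f(\cos\omega\cdot x+\sin\omega\cdot\eta)\,d\sig(\eta),\qquad 0<\omega<\pi.
$$
Swapping the order of integration via Fubini and using the Stiefel fibration of ${\rm Gr}_o(n+1,k)$, I would derive a representation
$$
(F_x^*\tilde F_o f)(r)=c_{n,k}\intl_0^\pi (M_\omega f)(x)\,K_k(r,\omega)\,\sin^{n-1}\omega\,d\omega,
$$
where $K_k(r,\omega)$ is the geometric kernel coming from the measure of those $k$-planes through $o$ at geodesic distance $\cos^{-1}r$ from $x$ which meet the latitude $\{y\in\sn:x\cdot y=\cos\omega\}$.

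The next step is to compute $K_k$ explicitly and to reorganize the integral. After the substitutions $\tau=s^2$, $\rho=r^2$ and a parallel change of variable in $\omega$, the geometric kernel together with the outer weight $r^k$ and the polar factor $\sin^{n-1}\omega$ should collapse into the one-dimensional Abel kernel $(\tau-\rho)^{k/2-1}$. Concretely, I aim to show that the bracketed expression in (\ref{90ashel}) equals $\tfrac{1}{2}\pi^{-k/2}(I_+^{k/2}g)(\tau)$ with $g(\tau)=\tau^{(k-1)/2}(F_x^*\vp)(\sqrt\tau)$, where $I_+^{k/2}$ is the Riemann--Liouville fractional integral of order $k/2$. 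Since $\tfrac{1}{2s}\partial_s=\partial_\tau$, the outer operator in (\ref{90ashel}) becomes $\partial_\tau^k$.

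At this stage the Abel inversion identity $\partial_\tau^k I_+^{k/2}=D_+^{k/2}$ (iterating $\partial_\tau I_+^\a=I_+^{\a-1}$) finishes the argument. For even $k$, $D_+^{k/2}=\partial_\tau^{k/2}$ and the formula collapses to the closed form $\partial_\tau^{k/2}[\tau^{(k-1)/2}(F_x^*\vp)(\sqrt\tau)]$, which is precisely (\ref{90ashele}); for general $k$, the integrated form (\ref{90ashel}) is the standard realization of $D_+^{k/2}$ as $\partial_\tau^k\circ I_+^{k/2}$. Taking $s\to1^-$ corresponds to $\omega\to 0$ in the spherical mean, and since $f\in C^+(\sn)$ is uniformly continuous on the compact sphere, $(M_\omega f)(x)\to f(x)$ uniformly in $x$, yielding the sup-norm convergence.

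The main obstacle I expect is the explicit evaluation of $K_k(r,\omega)$ and the bookkeeping of constants, so that the product of all normalizing factors matches $\pi^{-k/2}/\Gamma(k/2)$ exactly. This hinges on the coarea formula applied to the submersion $\z\mapsto|\mathrm{proj}_\z x|$ on ${\rm Gr}_o(n+1,k)$, together with a reduction to a lower-rank Grassmannian in $x^\perp$, and it is at this point that the dimension parameters $n$ and $k$ interact nontrivially through the factor $\sin^{n-1}\omega$. A secondary but nontrivial technical point is justifying differentiation under the integral sign up to $s=1$; this reduces to the uniform boundedness of the relevant integrand on $\sn$, which is routine for $f\in C^+(\sn)$ but must be written out, and it is what forces the limit rather than evaluation at $s=1$.
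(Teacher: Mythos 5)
First, note that the paper does not prove Theorem \ref{invrhys} at all: it is recalled from \cite[Theorem 5.3]{Ru13b} (the text says ``we recall one of them''), so there is no internal proof to compare against. Your strategy --- compose $\tilde F_o$ with the shifted dual transform $F_x^*$, identify the result as a Riemann--Liouville (Abel) fractional integral of order $k/2$ in the variable $\tau=s^2$ of the spherical means of $f$, and then invert by fractional differentiation --- is precisely the Funk--Radon--Helgason method used in that reference, so the route is the right one. The parts you do carry out are correct: the substitution $\rho=r^2$ turns the bracket in (\ref{90ashel}) into $\tfrac{1}{2}\pi^{-k/2}(I_+^{k/2}g)(\tau)$ with $g(\rho)=\rho^{(k-1)/2}(F_x^*\vp)(\sqrt{\rho})$, one has $\tfrac{1}{2s}\partial_s=\partial_\tau$ so the outer operator is $\partial_\tau^k$, and (\ref{90ashele}) is the even-$k$ specialization.

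However, as written the argument has a genuine gap, and it sits exactly where you flag your ``main obstacle.'' The identity $\partial_\tau^k I_+^{k/2}=D_+^{k/2}$ does not ``finish the argument'': it only rewrites the right-hand side of (\ref{90ashel}) as $\tfrac{1}{2}\pi^{-k/2}(D_+^{k/2}g)(\tau)$ with $\tau\to1$. To conclude that this equals $f(x)$ you must first prove the forward identity, namely that $g$ itself equals $2\pi^{k/2}I_+^{k/2}h_x$ for a function $h_x$ built from the spherical means of $f$ with $h_x(\tau)\to f(x)$ uniformly in $x$ as $\tau\to1$; only then does $D_+^{k/2}I_+^{k/2}h_x=h_x$ yield the theorem. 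That forward identity is exactly the explicit evaluation of your kernel $K_k(r,\omega)$ and of all the normalizing constants, which you defer with ``should collapse.'' This is not bookkeeping: it is the entire analytic content of the statement, since everything you actually verify (the change of variables, the composition law for $I_+^\alpha$, uniform continuity of spherical means) is routine, and the formula would be false if the kernel computation produced anything other than $(\tau-\rho)^{k/2-1}$ with the factor $\pi^{k/2}$. So the proposal is a correct plan that matches the method of the cited source, but it is not yet a proof.
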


Now we proceed to inversion of $F_a$, which, by Theorem \ref{kdndt}, is factorized as $F_a=N_a F_o M_a$.  Here the operators $M_a$ and $N_a$  are injective, so that
\be\label{tyhce1} (M_{a}^{-1}  f)(x)=(1-a\cdot \vp_a x)^{k-1}\, (f \circ \vp_a)(x), \qquad N_{a}^{-1} \Phi= \Phi\circ \vp_a.\ee
The following definition is motivated by the factorization $F_a=N_a F_o M_a$ and nicely agrees with the case $a=o$.

\begin{definition} \label {uqg33} {\it  A  function $f\in C(\sn)$  is called $a$-even (or  $a$-odd) if
$M_a f$  is even (or odd, resp.) in the usual sense.
 The subspaces of all $a$-even and $a$-odd continuous functions on $\sn$ will be denoted by $C_a^+ (\sn)$ and $C_a^- (\sn)$, respectively.  The restriction  of $F_a$ onto   $C_a^{+}(\sn)$ will be denoted by  $\tilde F_a$.}
 \end{definition}

\begin{theorem} \label {kjy2} Let $1<k\le n$. Then
 $ \ker \,(F_a)=C_a^{-}(\sn)$ and the restricted operator   $\tilde F_a$  is injective.
 A  function $f\in C_a^{+}(\sn) $ can be uniquely reconstructed from $g=\tilde F_a f$ by
 \be\label{hhhht1}
f\equiv \tilde F^{-1}_a g=M_{a}^{-1} \tilde F_o^{-1} N_{a}^{-1} g,\ee
where $M_{a}^{-1}$, $ \tilde F_o^{-1}$, and $N_{a}^{-1}$ are defined by  (\ref{tyhce1}) and Theorem \ref{invrhys}.
   \end {theorem}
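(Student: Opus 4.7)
The plan is to reduce everything to the factorization $F_a = N_a F_o M_a$ of Theorem \ref{kdndt}, which has already done the heavy lifting. Once we have this factorization together with explicit two-sided inverses of $M_a$ and $N_a$ (given in \eqref{tyhce1}), the theorem becomes essentially bookkeeping plus one classical input about $F_o$.

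First I would verify that $M_a$ and $N_a$ are bijections on $C(\sn)$ and $C({\rm Gr}_o(n+1,k))$ respectively. For $N_a$ this is immediate, since $\vp_a$ is an involutive diffeomorphism of $\sn$ and hence induces an involution on ${\rm Gr}_o(n+1,k)$, so $N_a^{-1}\Phi = \Phi\circ \vp_a$. For $M_a$, the formula $(M_a f)(y) = (s_a/(1-a\cdot y))^{k-1}(f\circ\vp_a)(y)$ is the product of a nowhere-vanishing continuous weight with a composition by an involution; a direct calculation (using $\vp_a\circ\vp_a = {\rm id}$ and the identity $(1-a\cdot y)(1-a\cdot\vp_a y) = s_a^2$, which follows from \eqref{Evf1} restricted to $\sn$) shows that the stated $M_a^{-1}$ is indeed its inverse.

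Next I would invoke the classical fact, valid precisely when $k>1$ (so that the section spheres have positive dimension and the even/odd decomposition is the entire picture), that $\ker F_o = C^-(\sn)$ and $\tilde F_o$ is injective on $C^+(\sn)$; references to this are already cited in the paragraph preceding Theorem \ref{invrhys}. Combining with the factorization and the bijectivity of $N_a$, I would write
\[
F_a f = 0 \iff F_o(M_a f) = 0 \iff M_a f \in C^-(\sn) \iff f \in C_a^-(\sn),
\]
where the last step is literally Definition \ref{uqg33}. This gives $\ker F_a = C_a^-(\sn)$.

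For injectivity of $\tilde F_a$ and the inversion formula, I would observe that if $f \in C_a^+(\sn)$ then $M_a f \in C^+(\sn)$ by definition, so $\tilde F_o^{-1}$ of Theorem \ref{invrhys} applies. Given $g = \tilde F_a f = N_a F_o M_a f$, successively applying $N_a^{-1}$, $\tilde F_o^{-1}$, and $M_a^{-1}$ recovers $f$, which is exactly \eqref{hhhht1}; injectivity of $\tilde F_a$ follows since each of these three operators is injective on the relevant subspace. There is no real obstacle beyond ensuring the $a$-even/$a$-odd decomposition transported through $M_a$ matches the classical even/odd decomposition on which $F_o$ is understood — but this compatibility is built into Definition \ref{uqg33}, so the argument is essentially a diagram chase. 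The only place where one must be careful is the hypothesis $k>1$, needed so that $\ker F_o$ equals $C^-(\sn)$ rather than a larger space.
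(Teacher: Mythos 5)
Your proof is correct and takes essentially the same route as the paper, which disposes of this theorem in one line as an immediate consequence of the factorization $F_a=N_aF_oM_a$ from Theorem \ref{kdndt}, the invertibility of $M_a$ and $N_a$, and the classical kernel/injectivity results for $F_o$; your fleshing out of the diagram chase is exactly what is intended. One small quibble: the identity $(1-a\cdot y)(1-a\cdot \vp_a y)=s_a^2$ does not follow from (\ref{Evf1}) restricted to $\sn$ (there both sides vanish identically), but rather from a direct computation with (\ref{E:g_a}) using $a\cdot \mathrm{P}_a y=a\cdot y$ and $a\cdot \mathrm{Q}_a y=0$.
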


This statement is an immediate consequence of (\ref{vt1ata}) and the corresponding results for $F_o$.

\begin{remark} \label {klut} In the  case $k=1$, which is not included in Theorem \ref {kjy2}, the plane $\t$ is a line and  the integral (\ref{gfu}) is a sum of the values of $f$ at the  points where this line intersects the sphere. If $ x$ is one of such points  and $L_{a,x}$ is the line through $a$ and $x$, then
\be\label {l762}
(F_a f)(L_{a,x}) = f(x) + f(\tau_a x).\ee
The  $a$-odd functions, for which  $f(x)=-f(\tau_a x)$,  form the kernel of the operator (\ref{l762}). An  $a$-even  function $f$,  satisfying $f(x)=f(\tau_a x)$, can be reconstructed from $(F_a f)(L_{a,x})$  by the formula
\be\label {l762a} f(x) =\frac{1}{2} \, (F_a f)(L_{a,x}).\ee
\end{remark}

\subsection {Alternative description of the  subspaces $C_a^{\pm}(\sn)$}
 We set
\be\label{jytr0}
\rho_a(x)=\left(\frac{1-|a|^2}{|a-x|^2}\right)^{k-1}, \qquad (W_{a} f)(x)=\rho_a(x) f (\tau_a x),\ee
where  $\tau_a$ is the  reflection (\ref{lab2in}).
\begin{lemma} \label {u54aq} The operator $W_{a}$ is an involution, i.e., $W_{a}W_{a} f=f$.
\end{lemma}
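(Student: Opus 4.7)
The plan is to compute $W_aW_af$ directly from the definition and reduce the statement to the identity
$$|a-x|\cdot |a-\tau_a x|=1-|a|^2, \qquad x\in \sn.$$
More precisely, applying $W_a$ twice yields
$$(W_aW_af)(x)=\rho_a(x)\,(W_af)(\tau_a x)=\rho_a(x)\,\rho_a(\tau_a x)\,f(\tau_a\tau_a x).$$
Since $\tau_a$ is the reflection about $a$ inside the chord $[x,\tau_a x]$, it is clear geometrically (and can be checked directly from (\ref{lab2in})) that $\tau_a\tau_a x=x$. Thus the lemma reduces to showing $\rho_a(x)\rho_a(\tau_a x)=1$, i.e., the above product formula for the distances.

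First I would verify $\tau_a\tau_a x=x$ either by a one-line geometric argument (reflecting twice across the same point on a line is the identity) or by plugging (\ref{lab2in}) into itself. Then, to establish the distance identity, my preferred route is to use the tools already developed in Section 2 rather than brute-force expansion. Specifically, (\ref{wqvE:g_a5}) gives
$$\frac{1-a\cdot \vp_a x}{1+a\cdot \vp_a x}=\frac{1-|a|^2}{|a-x|^2}.$$
Applying the same identity with $x$ replaced by $\tau_a x$ and invoking the intertwining relation (\ref{vE:g_a5}), which yields $\vp_a\tau_a x=\tau_o\vp_a x=-\vp_a x$ and hence $a\cdot\vp_a\tau_a x=-a\cdot\vp_a x$, I obtain
$$\frac{1+a\cdot \vp_a x}{1-a\cdot \vp_a x}=\frac{1-|a|^2}{|a-\tau_a x|^2}.$$
Multiplying the two displays gives $|a-x|^2\,|a-\tau_a x|^2=(1-|a|^2)^2$, whence $|a-x|\,|a-\tau_a x|=1-|a|^2$. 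Raising to the power $k-1$ and rearranging yields $\rho_a(x)\rho_a(\tau_a x)=1$, which finishes the proof.

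There is really no main obstacle here: the identity $|a-x|\cdot|a-\tau_a x|=1-|a|^2$ is just the power-of-the-point formula for the point $a$ inside the unit sphere applied to the chord through $a$ with endpoints $x$ and $\tau_a x$. Its clean proof via (\ref{wqvE:g_a5}) and (\ref{vE:g_a5}) is the only mildly delicate step; everything else is bookkeeping. An alternative, purely computational proof can be given by evaluating $\tau_a x-a$ directly from (\ref{lab2in}): using $|x-a|^2=1-2x\cdot a+|a|^2$ on $\sn$, one finds $\tau_a x-a=(1-|a|^2)(a-x)/|x-a|^2$, so $|\tau_a x-a|=(1-|a|^2)/|x-a|$, giving the same identity.
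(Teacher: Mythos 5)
Your proposal is correct and follows essentially the same route as the paper's proof: both reduce the claim to $\rho_a(x)\rho_a(\tau_a x)=1$ via $\tau_a\tau_a x=x$ and then establish this product identity by applying (\ref{wqvE:g_a5}) at $x$ and at $\tau_a x$ together with the intertwining relation (\ref{vE:g_a5}), so that the two fractions cancel. The geometric gloss (power of the point $a$) and the alternative direct computation are nice additions but not needed.
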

\begin{proof} The  statement is obvious for $a=o$, when  $(W_{0} f)(x)=f(-x)$. It is also obvious for any $a\in \bbb^n$ if $k=1$.
In the general case, taking into account that $\tau_a\tau_a x=x$,
 we have
\[(W_{a}W_{a} f)(x)=\left [\frac{1-|a|^2}{|a-x|^2}\,  \frac{1-|a|^2}{|a-\tau_a x|^2}\right ]^{k-1} \, f(x).\]
 By  (\ref {wqvE:g_a5}) and  (\ref{vE:g_a5}), the expression in square brackets can be written as
\[
\frac{(1-a\cdot \vp_a x)\, (1-a\cdot \vp_a \tau_a x)}{(1+ a\cdot \vp_a x)\, (1+ a\cdot \vp_a \tau_a x)}  =
\frac{(1-a\cdot \vp_a x)\, (1+ a\cdot \vp_a x) }{(1+ a\cdot \vp_a x)\, (1-a\cdot \vp_a x)} =1.\]
%which means that
%\be\label{nxbst} \rho_a(x) \rho_a (\tau_a x)=1.\ee
This gives the result.
\end{proof}

\begin{theorem} \label {akjy2} A function  $f\in C(\sn)$  is $a$-even (or $a$-odd) if and only if $f=W_af$ (or $f=-W_af$, respectively).
  \end {theorem}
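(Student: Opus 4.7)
The plan is to unwind the definition of $a$-even/$a$-odd directly via the formula (\ref{vt1de}) for $M_a$, reduce the condition $(M_af)(-y)=\pm (M_af)(y)$ to a pointwise identity on $\sn$ by the substitution $y=\vp_a(x)$, and recognize the resulting identity as $W_af=\pm f$. The two ingredients that drive the reduction are the intertwining relation $\vp_a\tau_a=\tau_o\vp_a$ from (\ref{vE:g_a5}) and the ``Poisson'' identity (\ref{wqvE:g_a5}).

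First I would record the direct consequence of (\ref{Evf}) and (\ref{vE:g_a5}): writing $y=\vp_a(x)$ and using $\vp_a\vp_a=\mathrm{id}$, one has
\[
\vp_a(-y)=\vp_a(\tau_o\vp_a x)=\tau_a x .
\]
Then, from (\ref{vt1de}),
\[
(M_af)(-y)=\Bigl(\tfrac{s_a}{1+a\cdot y}\Bigr)^{k-1} f(\vp_a(-y))=\Bigl(\tfrac{s_a}{1+a\cdot \vp_a x}\Bigr)^{k-1} f(\tau_a x),
\]
\[
(M_af)(y)=\Bigl(\tfrac{s_a}{1-a\cdot y}\Bigr)^{k-1} f(\vp_a(y))=\Bigl(\tfrac{s_a}{1-a\cdot \vp_a x}\Bigr)^{k-1} f(x).
\]

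Next, the condition $(M_af)(-y)=\vare (M_af)(y)$ with $\vare=\pm 1$, for all $y\in\sn$, becomes after the substitution $y=\vp_a(x)$ (which is a bijection of $\sn$ by (\ref{Evf1}))
\[
f(\tau_a x)=\vare\Bigl(\tfrac{1+a\cdot\vp_ax}{1-a\cdot\vp_ax}\Bigr)^{k-1} f(x), \qquad x\in\sn.
\]
Applying (\ref{wqvE:g_a5}) to rewrite the bracket as $|a-x|^{2}/(1-|a|^2)$, the identity takes the form $f(\tau_a x)=\vare\, f(x)/\rho_a(x)$, i.e., $\rho_a(x) f(\tau_a x)=\vare f(x)$, which is precisely $W_a f=\vare f$.

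Choosing $\vare=+1$ gives the characterization of $C_a^{+}(\sn)$ and $\vare=-1$ gives $C_a^{-}(\sn)$. The main point to get right is the identification $\vp_a(-y)=\tau_ax$ with $y=\vp_a(x)$; once this and (\ref{wqvE:g_a5}) are in hand, the argument is essentially algebraic and involves no further technicalities. Note that the proof is consistent with Lemma \ref{u54aq}: the condition $W_a f=\vare f$ implies $W_a W_a f=\vare^2 f=f$, as it should.
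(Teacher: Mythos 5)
Your proposal is correct and follows essentially the same route as the paper: both reduce the condition $(M_af)(-y)=\pm(M_af)(y)$ to a pointwise identity via the substitution $y=\vp_a x$, using the intertwining relation (\ref{vE:g_a5}) to identify $\vp_a(-y)$ with $\tau_a x$ and the identity (\ref{wqvE:g_a5}) to convert the ratio $(1+a\cdot\vp_a x)/(1-a\cdot\vp_a x)$ into $1/\rho_a(x)^{1/(k-1)}$-form. Your version merely spells out the two evaluations of $M_af$ more explicitly before comparing them; no substantive difference.
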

\begin{proof}  By  Definition \ref{uqg33},  $f\in C(\sn)$  is $a$-even if and only if
 $(M_{a} f)(y)= (M_{a} f)(-y)$ for all $y \in \sn$. The latter is equivalent to
\[ (f \circ \vp_a)(y)= \left (\frac{1-a\cdot y}{{1+a\cdot y}}\right )^{k-1} (f \circ \vp_a)(-y),\]
or (set $y=\vp_a x$ and use (\ref{wqvE:g_a5}) and (\ref{vE:g_a5}))
\[
f(x) = \left (\frac{1-a\cdot \vp_a x}{1+ a\cdot \vp_a x}\right )^{k-1}  f(\tau_a x)= \rho_a(x) f (\tau_a x)=(W_af)(x).\]
 The proof for the $a$-odd functions is similar.
\end{proof}

\begin{corollary} Every function $f\in C(\sn)$ can be represented as a sum of its $a$-even and $a$-odd parts. Specifically,
\be\label {klutv} f=f_a^+ +  f_a^-, \qquad  f_a^{\pm}  =\frac{f \pm W_af}{2}.\ee
\end{corollary}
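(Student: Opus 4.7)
The plan is to verify the decomposition directly from the formulas for $f_a^{\pm}$ and then apply Theorem \ref{akjy2} to identify each summand with an element of $C_a^+(\sn)$ or $C_a^-(\sn)$. First I would observe that $W_a$ maps $C(\sn)$ into itself: since $a\in\bbb^{n+1}$ lies strictly inside the sphere, $|a-x|>0$ for every $x\in\sn$, so $\rho_a$ is smooth on $\sn$, and $\tau_a$ is a continuous self-map of $\sn$ by formula (\ref{lab2in}); hence $W_af=\rho_a\cdot(f\circ\tau_a)\in C(\sn)$. In particular $f_a^+$ and $f_a^-$ are well-defined elements of $C(\sn)$.

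Next, the additive identity $f=f_a^+ + f_a^-$ is immediate from the definitions in (\ref{klutv}). To finish I need to show $f_a^+\in C_a^+(\sn)$ and $f_a^-\in C_a^-(\sn)$. By Theorem \ref{akjy2} these memberships are equivalent to the identities $W_a f_a^+=f_a^+$ and $W_a f_a^-=-f_a^-$. Both follow by applying $W_a$ term by term and invoking Lemma \ref{u54aq}, which says $W_aW_af=f$. Explicitly, using linearity of $W_a$,
\[ W_af_a^{\pm}=\frac{W_af\pm W_aW_af}{2}=\frac{W_af\pm f}{2}=\pm f_a^{\pm}, \]
which is exactly the characterization from Theorem \ref{akjy2}.

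The only step that requires any care is the linearity of $W_a$, which is clear from its definition $(W_af)(x)=\rho_a(x)\,f(\tau_ax)$ since $\rho_a$ does not depend on $f$ and composition with $\tau_a$ is linear in $f$. There is no serious obstacle; the corollary is essentially a formal consequence of the involutive nature of $W_a$ combined with the eigenfunction characterization of $C_a^{\pm}(\sn)$ established in Theorem \ref{akjy2}. The decomposition is also unique, since if $f=g+h$ with $g\in C_a^+(\sn)$ and $h\in C_a^-(\sn)$, then applying $W_a$ gives $W_af=g-h$, and solving the two equations forces $g=f_a^+$, $h=f_a^-$.
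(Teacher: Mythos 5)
Your argument is correct and coincides with the paper's own proof: both verify $W_af_a^{\pm}=\pm f_a^{\pm}$ via linearity of $W_a$ and the involution property $W_aW_af=f$ from Lemma \ref{u54aq}, then invoke Theorem \ref{akjy2}. The added remarks on continuity of $W_af$ and uniqueness of the decomposition are harmless extras beyond what the paper records.
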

\begin{proof} The first equality follows from the second one. Further, by Lemma \ref{u54aq},
\[W_ a f_a^{\pm} = \frac{W_a f  \pm W_aW_a f }{2} = \frac{W_a f  \pm  f}{2} = \pm f_a^{\pm}.\]
 Hence, by Theorem \ref{akjy2}, $f_a^+$ is $a$-even and  $f_a^-$ is $a$-odd.
\end{proof}

\section{Reconstruction from Two Centers}

 As we have seen in Section 3, a generic function $f\in C(\sn)$ cannot be reconstructed from  $F_a f$. Because  $F_a f=\tilde F_a f_a^+$,
   one can reconstruct only  the $a$-even part $f_a^+$ of $f$, whilst the $a$-odd part  is lost.
Our aim  is to show that complete reconstruction becomes possible if we
consider  two distinct centers  instead of one. Specifically, let $a, b\in \bbb^{n+1}$, $a\neq b$. Consider the
 system of two equations
\be\label {aaa1}F_a f=g, \qquad F_b f=h,\ee
and suppose that a function $f\in C(\sn)$ satisfies  this system. Then   $\tilde F_a f_a^+ =g$,  $\tilde F_b f_b^+=h$, and therefore, by (\ref{klutv}),
\[ f_a^{+}  \equiv \frac{f + W_af}{2}=\tilde F_a^{-1} g, \qquad  f_b^{+}  \equiv \frac{f + W_bf}{2}=\tilde F_b^{-1}h,\]
where
\be\label {ksa2} (W_{a} f)(x)=\rho_a(x) f (\tau_a x), \qquad (W_{b} f)(x)=\rho_b(x) f (\tau_b x).\ee
 Setting
\[ g_1= 2\tilde F_a^{-1} g, \qquad h_1= 2\tilde F_b^{-1}h,\]
 we obtain a pair of functional equations
\be\label {adaa1}  f= g_1 -W_{a} f, \qquad  f=h_1 -W_{b} f. \ee
 Then we substitute $f$ from the second equation into the right-hand side of the first one to get
\be\label {aaa1s} f =W f +q, \qquad W=W_{a}W_{b}, \qquad q= g_1-W_a h_1.\ee
Iterating (\ref{aaa1s}), we obtain
\begin{equation}\label{E:f=sum}
f=W^m f + \sum\limits_{j=0}^{m-1} W^j q; \qquad m=1,2, \ldots \, .
\end{equation}

This equation generates a dynamical system on $\sn$.

\begin{lemma}\label{lug6n} Let $a^*$ and $ b^* $ be the points  on $\sn$ that lie on the straight line through $a$ and $b$. Suppose that $a$ is closer to $a^*$ than $b$. If $W=W_{a}W_{b}$, then $\lim\limits_{m\to \infty} (W^m f)(x) =0$ for all $x\in \sn \setminus \{a^*\}$  and  $1< k \leq n$. If $k=1$ and   $x \in \sn \setminus \{ a^*\}$, then
$\lim\limits_{m \to  \infty}(W^mf)(x)=f(b^*)$.
\end{lemma}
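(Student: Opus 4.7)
The plan is to iterate $W = W_a W_b$ and view $W^m$ as a weighted composition operator driven by the M\"obius dynamical system $T := \tau_b \circ \tau_a$ on $\sn$. A direct calculation from (\ref{jytr0}) gives $(Wf)(x) = \omega(x)\, f(Tx)$ with $\omega(x) := \rho_a(x)\,\rho_b(\tau_a x)$, and by induction
\[
(W^m f)(x) = \Omega_m(x)\, f(T^m x), \qquad \Omega_m(x) := \prod_{j=0}^{m-1} \omega(T^j x).
\]
Since $|a|, |b| < 1$ while $x, \tau_a x \in \sn$, the weight $\omega$ is continuous and strictly positive on $\sn$. The lemma therefore reduces to (i) identifying the attracting fixed point of $T$ and (ii) controlling $\Omega_m$ via the value of $\omega$ at that point.

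For step (i), I would first observe that each of $\tau_a, \tau_b$ is an involutive M\"obius automorphism of $\bbb^{n+1}$ preserving $\sn$ (cf.\ (\ref{vE:g_a5})) and swapping the endpoints $a^*, b^*$, so $T$ is a M\"obius automorphism of $\bbb^{n+1}$ fixing both $a^*$ and $b^*$. Since $a, b$ lie on the line $L$ through $a^*, b^*$, every $2$-plane containing $L$ cuts $\sn$ in a great circle preserved by both $\tau_a$ and $\tau_b$ (for $x$ in such a plane, the chord $ax$ remains in it), and rotations about $L$ commute with $\tau_a, \tau_b$, so by rotational symmetry it suffices to verify $T^m x \to b^*$ on one such circle. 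I would then choose coordinates with $a^* = -e_1$, $b^* = e_1$, $a = \alpha e_1$, $b = \beta e_1$, so the hypothesis ``$a$ closer to $a^*$ than $b$'' translates to $-1 < \alpha < \beta < 1$. The Jacobian identity in Lemma \ref{lem312}, together with the conformality of $\tau_a, \tau_b$, yields the one-dimensional derivatives $|D\tau_a|(a^*) = (1-\alpha)/(1+\alpha)$ and $|D\tau_b|(b^*) = (1+\beta)/(1-\beta)$; by the chain rule,
\[
|DT|(a^*) = |D\tau_b|(b^*)\,|D\tau_a|(a^*) = \frac{(1+\beta)(1-\alpha)}{(1-\beta)(1+\alpha)} > 1,
\]
so $a^*$ is repelling and $b^*$ is the attracting fixed point of the hyperbolic M\"obius $T$. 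Standard dynamics of a hyperbolic M\"obius map on the circle then give $T^m x \to b^*$ for every $x \in \sn \setminus \{a^*\}$.

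For step (ii), I would use $\tau_a(b^*) = a^*$ and compute, in the same coordinates,
\[
\omega(b^*) = \rho_a(b^*)\,\rho_b(a^*) = \left(\frac{(1+\alpha)(1-\beta)}{(1-\alpha)(1+\beta)}\right)^{k-1},
\]
which is strictly less than $1$ whenever $k > 1$, by the same inequality $\alpha < \beta$. Fixing $x \ne a^*$ and choosing $r \in (\omega(b^*), 1)$, the continuity of $\omega$ at $b^*$ together with $T^j x \to b^*$ supplies $j_0 = j_0(x)$ with $\omega(T^j x) \le r$ for all $j \ge j_0$; hence $\Omega_m(x) \le C_x\, r^{m-j_0} \to 0$ as $m \to \infty$. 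Boundedness of $f$ on $\sn$ then gives $(W^m f)(x) = \Omega_m(x) f(T^m x) \to 0$, proving the lemma for $1 < k \le n$. For $k = 1$ the weights $\rho_a, \rho_b$ are identically $1$, so $\Omega_m \equiv 1$ and continuity of $f$ yields $(W^m f)(x) = f(T^m x) \to f(b^*)$.

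The main obstacle will be step (i): converting the qualitative geometric hypothesis on the relative position of $a, b, a^*$ into the quantitative statement that $b^*$ is the attracting fixed point of $T$. Once this dynamical fact is in hand, the multiplicative structure of $W^m$ reduces everything else to a routine geometric-series estimate with ratio $\omega(b^*) < 1$, and the $k = 1$ case follows directly from continuity of $f$.
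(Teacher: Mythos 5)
Your proposal is correct and follows the same overall strategy as the paper's proof: factor $(W^m f)(x)$ into a weight product times $f(T^m x)$ with $T=\tau_b\tau_a$, show $T^m x\to b^*$ for every $x\neq a^*$, and reduce everything to the single inequality $\omega(b^*)<1$ — your value $\bigl(\frac{(1+\alpha)(1-\beta)}{(1-\alpha)(1+\beta)}\bigr)^{k-1}$ coincides with the paper's (\ref{suu1e}) under the substitution $t=(1+\alpha)/2$, $s=(1+\beta)/2$. The one place where you genuinely diverge is the dynamical step: the paper establishes $T^m x\to b^*$ by an elementary monotonicity argument (the distance from $T^j x$ to $b^*$ along the invariant circle decreases, so the orbit converges to a fixed point of $T$, which must be $b^*$), whereas you compute the multiplier $|DT|(a^*)=\frac{(1+\beta)(1-\alpha)}{(1-\beta)(1+\alpha)}>1$ from the conformal factor implicit in Lemma \ref{lem312} and invoke the standard classification of hyperbolic M\"obius maps of a circle. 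Your route is more quantitative — it ties the choice of attracting endpoint directly to the hypothesis $\alpha<\beta$ and yields the local rate of attraction — at the cost of importing the fixed-point dichotomy for M\"obius maps; the paper's route is more self-contained but leaves the ``simple geometric consideration'' to the reader. Two cosmetic points: the section of $\sn$ by the $2$-plane through $x$, $a$, $b$ is a circle but generally not a \emph{great} circle (this does not affect the argument), and your indexing $\Omega_m=\prod_{j=0}^{m-1}\omega(T^jx)$ paired with $f(T^mx)$ is in fact the consistent one (the paper's (\ref{sfr2c5}) carries a harmless off-by-one).
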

\begin{proof}
We observe that
\be\label{sb5}
(W f)(x)=(W_{a}W_{b} f)(x)=\rho_a(x)\rho_b(\tau_a x) f(\tau_b\tau_a x).\ee
Denote
\be\label{srr2a} \rho (x)\!=\!\rho_a (x) \rho_b (\tau_a x)\!=\!\left[ \frac{(1-|a|^2)\, (1-|b|^2)}{|a-x|^2 \, |b-\tau_a x|^2}\right]^{k-1}, \quad
\mathrm{T}= \tau_b \tau_a.\ee
Then $(W f)(x) = \rho (x) f(\mathrm{T}x)$ and, by iteration,
\be\label{sfr2c5}(W^m f)(x) =  \om_m(x)\,f (\mathrm{T}^{m+1} x), \qquad  \om_m(x)= \prod\limits_{j=0}^{m} \rho (\mathrm{T}^j x).\ee
For any $x\neq a^*$, the mapping $\mathrm{T}$ preserves the circle $C_{x,a,b}$ in the 2-plane spanned by $x$, $a$ and $b$, and leaves the  points $a^*$ and $b^*$ fixed. A simple geometric consideration in the 2-plane shows  that  the distance from the points $\mathrm{T}^j x\in C_{x,a,b}$  to  $b^*$ monotonically decreases, and therefore, the sequence $\mathrm{T}^j x$ has a limit. This limit must be a fixed point of the mapping $\mathrm{T}$, and hence $\mathrm{T}^j x \to b^*$ as $j \to \infty$. Because $\rho$ is continuous, it follows that
 \be\label{stttgy} \lim\limits_{j\to \infty} \rho (\mathrm{T}^j x) =\rho (b^*).\ee
Using this fact, let us show that if $k>1$, then
\be\label{sfr2c5b}
\lim\limits_{m\to \infty} \om_m(x)=0.\ee
Once (\ref{sfr2c5b}) has been  proved, the statement of the lemma for $k>1$ will follow because the factor $f(\mathrm{T}^{m+1}x)$ has  finite limit $f(b^*)$.

To prove (\ref{sfr2c5b}), it suffices to show that
\be\label{sgvf21}
\rho (b^*) <1,\ee
where, by (\ref{srr2a}),
\be\label{sgvf21e}
\rho(b^*)=\left [ \frac{(1-|a|^2)\,(1-|b|^2)}{|a-b^*|^2\, |a^* -b|^2} \right ]^{k-1}.\ee
Let
\be\label{sgvy1e} a=a^*\!+t(b^* \!-\!a^*), \quad b =a^*\!+s(b^* \!-\!a^*), \qquad 0 < t <s <1.\ee
Taking into account that $|a^*|=|b^*|=1$ and using (\ref{sgvy1e}), we obtain
\be\label{asgvy1e}
1\!-\!|a|^2= 2t(1\!-\!t)(1\!-\!a^* \!\cdot b^*), \qquad 1\!-\!|b|^2= 2s(1\!-\!s)(1\!-\!a^* \!\cdot b^*), \ee
\[
|a\!-\!b^*|^2=2(1\!-\!t)^2 (1\!-\!a^* \!\cdot b^*), \qquad  |a^*\!- \!b|^2=2s^2 (1\!-\!a^* \!\cdot b^*).\]
Hence
\be\label{suu1e}
\rho(b^*)=\left[ \frac{t(1-s)}{s(1-t)} \right ]^{k-1}<1.\ee
 The last inequality is an immediate consequence of the assumption $0 < t <s <1$.

The case $k=1$ is simpler. In this case $\rho(x)=1$, and therefore, $(W^mf)(x)=f(\mathrm{T}^{m+1}x) \to f(b^*)$ as $m \to \infty$, $x \in \sn \setminus \{ a^*\}.$
\end{proof}

The above reasoning  yields the following  preliminary conclusion.
If $a \neq b$, then, by Theorem \ref{kjy2},  the kernel of the map $f \to (F_af,F_bf)$, $f\in C(\sn)$, is $C_a^-(\sn)\cap C_b^-(\sn)$.  But if $f$ is odd with respect to both $a$ and $b$,  then, by Theorem \ref{akjy2}, $Wf=f$.   By  Lemma \ref{lug6n}  it  follows  that $f(x)  =  0$  for all $x \in \sn \setminus \{a^*\}$.  However, since $f$ is continuous, we must have $f= 0$ {\it everywhere} on $\sn$.  In particular, it follows that $f$ can be reconstructed from the knowledge of $F_af$ and $F_bf$,  or, what is the same, from $f^+_a$ and $f^+_b$.

More precisely, we have  the following result.

\begin{theorem}\label {klut}  Let $W_{a}$ and $W_{b}$ be  involutions (\ref {ksa2}), $1<k\le n$. If  the system of equations $F_a f=g$ and $F_b f=h$ has a solution $f\in C(\sn)$, then this solution  is unique and can defined by the pointwise convergent series
\be\label {bbi}
f(x)=\sum\limits_{j=0}^{\infty} W^j q(x), \qquad x\neq a^*,\ee
where $ W= W_{a}W_{b}$, $q=2\, [\tilde F_a^{-1} g - W_a \tilde F_b^{-1}h]$,
$\tilde F_a^{-1}$ and $\tilde F_b^{-1}$ being defined as in (\ref{hhhht1}).
Alternatively,
\be\label {bbi2}
f(x)=\sum\limits_{j=0}^{\infty} \tilde W^j r(x), \qquad x\neq b^*,\ee
where $ \tilde W= W_{b}W_{a}$ and $r=2\, [\tilde F_b^{-1} h - W_b \tilde F_a^{-1}g]$.
\end{theorem}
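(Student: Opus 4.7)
The plan is to assemble the ingredients already established in Section~3 and in Lemma \ref{lug6n}. Assume $f\in C(\sn)$ solves $F_af=g$ and $F_bf=h$. First I would observe that, by the kernel characterization in Theorem \ref{kjy2}, $F_af=\tilde F_a f_a^+$ and $F_bf=\tilde F_b f_b^+$, so the $a$-even and $b$-even parts of $f$ are determined: $f_a^+=\tilde F_a^{-1}g$ and $f_b^+=\tilde F_b^{-1}h$. Combined with $f_a^{\pm}=(f\pm W_af)/2$ and $f_b^{\pm}=(f\pm W_bf)/2$ from (\ref{klutv}), this reproduces the two functional equations (\ref{adaa1}); substituting the second into the first, exactly as in the derivation of (\ref{aaa1s}), gives the fixed-point identity $f=Wf+q$ with $W=W_aW_b$ and the prescribed $q$. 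Iterating this $m$ times yields the telescoping formula (\ref{E:f=sum}).

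Next I would invoke Lemma \ref{lug6n}, which asserts that for $1<k\le n$ and every $f\in C(\sn)$ one has $(W^mf)(x)\to 0$ as $m\to\infty$ at each $x\in\sn\setminus\{a^*\}$. Applying this to our solution and passing to the limit in (\ref{E:f=sum}) delivers the pointwise series representation (\ref{bbi}) at every $x\neq a^*$.

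For uniqueness, I would let $f$ be the difference of two solutions, so that $F_af=F_bf=0$. Theorem \ref{kjy2} places $f$ in $C_a^-(\sn)\cap C_b^-(\sn)$, and Theorem \ref{akjy2} translates this to $W_af=-f=W_bf$; hence $Wf=W_aW_bf=W_a(-f)=f$, i.e.\ $f$ is a fixed point of $W$. By induction $W^mf\equiv f$ for all $m$, but Lemma \ref{lug6n} then forces $f(x)=0$ at every $x\neq a^*$, and continuity of $f$ upgrades this to $f\equiv 0$ on all of $\sn$. The alternative representation (\ref{bbi2}) is obtained by performing the substitution in the opposite order: plugging the first equation of (\ref{adaa1}) into the second yields $f=\tilde W f+r$ with $\tilde W=W_bW_a$ and the prescribed $r$, after which the argument proceeds identically with $a^*$ replaced by $b^*$.

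The principal technical difficulty has already been absorbed into Lemma \ref{lug6n}, whose heart is the geometric estimate $\rho(b^*)<1$ driving the weights $\omega_m$ to zero; beyond that, the proof is essentially bookkeeping. The one subtle point worth flagging is that Lemma \ref{lug6n} delivers convergence only off the single fixed point $a^*$, so continuity of the candidate solution is indispensable in the uniqueness step in order to upgrade vanishing on $\sn\setminus\{a^*\}$ to vanishing on the entire sphere.
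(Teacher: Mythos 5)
Your proposal is correct and follows essentially the same route as the paper: derive the fixed-point identity $f=Wf+q$ from the two functional equations, pass to the limit in the iterated formula using Lemma \ref{lug6n}, and obtain uniqueness by noting that a function in $C_a^-(\sn)\cap C_b^-(\sn)$ satisfies $Wf=f$, hence vanishes off $a^*$ and, by continuity, everywhere. The only cosmetic difference is that the paper places the derivation of $f=Wf+q$ and the uniqueness argument in the discussion preceding the theorem rather than inside the proof itself.
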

\begin{proof}
To prove (\ref{bbi}),  it suffices to pass to the limit in (\ref{E:f=sum}), taking into account that, by Lemma \ref{lug6n},
the remainder $(W^m f)(x)$ of the series (\ref{bbi})  converges to zero
for every  $x\neq a^*$.  An alternative formula (\ref{bbi2}) then follows if we  interchange  $a$ and $b$, $g$ and $h$.
\end{proof}

\begin{remark}
In the case  $k=1$,
 a function $f\in C(\sn)$ can be reconstructed from the system $F_a f=g$, $ F_b f=h$ as follows.
 By Lemma \ref{lug6n}, $(W^m f)(x) \to f(b^*)$ as $m \to \infty$. Hence
\be
\label{kjyyr} f(x)=\sum\limits_{j=0}^{\infty} q(\mathrm{T}^{j+1}x)+f(b^*), \qquad x \neq a^*, \quad \mathrm{T}=\tau_b \tau_a,\ee
where $q(x)=2\, [(\tilde F_a^{-1} g)(x) - (\tilde F_b^{-1}h)(\tau_a x)]$. By (\ref{l762a}),
 \[
(\tilde F_a^{-1} g)(x)= \frac{1}{2} \,g(L_{a,x}), \qquad (\tilde F_b^{-1}h)(\tau_a x)=\frac{1}{2} \,h(L_{b,\tau_a x}),\]
where the line $L_{a,x}$  passes through $a$ and $x$ and     $L_{b,\tau_a x}$ passes  through $b$ and $\tau_a x$.  It follows that
\be\label {luki91} q(x)=g(L_{a,x}) - h(L_{b,\tau_a x}).\ee
Similarly, $(\tilde W^m f)(x) \to f(a^*)$, and we have
\be
\label{kjyyr1} f(x)=\sum\limits_{j=0}^{\infty} r(\tilde {\mathrm{T}}^{j+1}x) +f(a^*), \qquad x \neq b^*, \quad \tilde {\mathrm{T}}=\tau_a \tau_b,\ee
\be\label {luki91z} r(x)= h(L_{b,x}) - g(L_{a,\tau_b x}).\ee

The series (\ref{kjyyr})  and (\ref{kjyyr1}) reconstruct $f$ up to  unknown additive constants $f(a^*)$ or  $f(b^*)$, where $a^*$  and $b^*$ are the endpoints of the chord through $a$ and $b$.
However, complete reconstruction is still possible, if we  apply symmetrization, by summing (\ref{kjyyr})  and (\ref{kjyyr1}). This gives the following result.

\begin{theorem}\label {klut1} Let $k=1$. Then
\begin{equation} \label{E:bbi3}
2 f(x)= \sum\limits_{j=0}^{\infty} q(\mathrm{T}^{j+1}x)+ \sum\limits_{j=0}^{\infty} r(\tilde {\mathrm{T}}^{j+1}x) + F_a(L_{a,b}), \qquad x \neq a^*, b^*,
\end{equation}
where $q$ and $r$ are defined by (\ref{luki91}) and (\ref{luki91z}), respectively, $L_{a,b}$ is the line  through $a$ and $b$, and $F_a(L_{a,b})=f(a^*)+f(b^*)  \,  (=F_b(L_{a,b}))$ is known.  The values of $f$ at the  points $a^*$ and $b^*$ can be reconstructed by continuity.
\end{theorem}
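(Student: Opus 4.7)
The plan is to derive Theorem \ref{klut1} directly by adding the two one-sided reconstruction formulas in the preceding Remark, after identifying the missing constants $f(a^*)$ and $f(b^*)$ with the value of the known data on the chord $L_{a,b}$.

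First I would recall the $k=1$ version of (\ref{l762}): for any line $L$ through $a$ meeting $\sn$ in $x$ and $\tau_a x$, one has $(F_a f)(L) = f(x) + f(\tau_a x)$. Applying this to $L = L_{a,b}$, whose two intersections with $\sn$ are precisely $a^*$ and $b^*$, and using the collinearity $\tau_a a^* = b^*$, I obtain $F_a(L_{a,b}) = f(a^*)+f(b^*)$. By the same argument with $a$ replaced by $b$ (noting $\tau_b a^* = b^*$ since $a,b,a^*,b^*$ are collinear), I also get $F_b(L_{a,b}) = f(a^*)+f(b^*)$, which justifies the parenthetical remark in the statement.

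Next I would take $x\in \sn\setminus\{a^*,b^*\}$ and add the series (\ref{kjyyr}) and (\ref{kjyyr1}) proved in the Remark. Both expansions converge pointwise at such $x$ by Lemma \ref{lug6n}, and their sum gives
\[
2f(x)=\sum_{j=0}^{\infty}q(\mathrm{T}^{j+1}x)+\sum_{j=0}^{\infty}r(\tilde{\mathrm{T}}^{j+1}x)+f(a^*)+f(b^*).
\]
Substituting $f(a^*)+f(b^*)=F_a(L_{a,b})$ from the previous step yields the desired identity (\ref{E:bbi3}) pointwise on $\sn\setminus\{a^*,b^*\}$. Since each term on the right-hand side is explicitly computable from the data $g=F_af$ and $h=F_bf$ via (\ref{luki91}) and (\ref{luki91z}), this gives the reconstruction on $\sn\setminus\{a^*,b^*\}$.

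Finally, I would observe that since $f\in C(\sn)$, its values at the two exceptional points are determined by continuous extension from $\sn\setminus\{a^*,b^*\}$, which is dense in $\sn$. I do not expect any serious obstacle here: the convergence of the two series was already established in the Remark, the identification $f(a^*)+f(b^*)=F_a(L_{a,b})$ is a direct consequence of (\ref{l762}) applied to the chord through both centers, and the only subtlety, namely the exceptional nature of the endpoints $a^*$ and $b^*$, is handled automatically by continuity of $f$.
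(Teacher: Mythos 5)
Your proposal is correct and follows essentially the same route as the paper: the paper obtains Theorem \ref{klut1} precisely by summing the two one-sided expansions (\ref{kjyyr}) and (\ref{kjyyr1}) and observing that the unknown constant $f(a^*)+f(b^*)$ equals the known datum $F_a(L_{a,b})$ via (\ref{l762}) applied to the chord through $a$ and $b$. Your explicit verification that $\tau_a a^*=b^*$ (and likewise for $b$) just spells out a step the paper leaves implicit.
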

\end{remark}

 \section{ Norm Convergence  of the Reconstructing Series }

Reconstruction of $f$ by the pointwise convergent series (\ref{bbi}) and  (\ref{bbi2}) gives  a little
possibility to control  the accuracy of the result because the rate of the pointwise convergence depends on the point. Therefore, it is natural to look at the convergence in certain normed spaces. Below  we explore such convergence  in the spaces $C(\sn)$ and $L^p(\sn)$.  As above, we keep the notation  $a^*$  and $b^*$ for the endpoints of the chord through $a$ and $b$.

Consider the most interesting case $k>1$. By (\ref{E:f=sum}), the convergence of the series (\ref{bbi}) to $f$  is equivalent to convergence of its remainder $W^mf$ to $0$ as $m\to \infty$. Thus, it suffices to  confine  to  $W^mf$.

We first note that
the  series (\ref{bbi}) may diverge at the point $a^*$. Indeed, because
$(W^mf)(a^*)=f(\mathrm{T}^{m+1}a^*)\prod\limits_{j=0}^m \rho(\mathrm{T}^j a^*)$ and
 $a^*$ is a fixed point of the mapping $\mathrm{T}$, we have
$$(W^m f)(a^*)= \rho (a^*)^{m+1} f(a^*), \qquad \rho (a^*)=\left [\frac{(1-|a|^2)(1-|b|^2)}{|a-a^*|^2|b-b^*|^2}\right ]^{k-1}.$$
Suppose that $a$ and $b$  are symmetric with respect to the origin and
 $|a|=|b|=1/2$.  Then
\[
 \rho (a^*)\!=\! \left [\frac{(1+|a|)(1+|b|)}{(1-|a|)(1-|b|)}\right ]^{k-1}=9^{k-1},\]
and therefore $(W^m f)(a^*)=9^{(k-1)(m+1)} f(a^*) \to \infty$ as  $m \to \infty$ whenever $f(a^*) \neq 0$.
The latter means that  if $f(a^*) \neq 0$, then the series (\ref{bbi})  diverges at $a^*$  and  its uniform convergence  on the entire sphere  fails. Below it  will be shown that  the uniform convergence of this series fails for any $a, b \in \bbb^{n+1}$.

To understand the type of convergence, we need  a deeper insight in the dynamics of involved reflections.

\subsection{Dynamics of the Double Reflection Mapping $\mathrm{T}=\tau_b\tau_a$}
We know that the trajectory $\{\mathrm{T}^m x: \, m=0,1,2,\ldots \}$ of any point $ x \in \sn \setminus \{a^*\}$ converges to the point $b^*,$ which is the endpoint of the chord containing $a$ and $b$. Let us specify the character of this convergence.

\begin{lemma}\label{L:attracting} The mapping $\mathrm{T}=\tau_b\tau_a$ maps the punctured sphere $\sn_a=\sn \setminus \{a^*\}$ onto itself.
The point $b^*$ is the attracting point of the dynamical system $T^m: \,\sn_a \to \sn_a$ uniformly on compact subsets, that is, for any open neighborhood $U \subset \sn_a$ of  $b^*$  and any compact set $K \subset \sn_a$ there exists $\overline m$ such that $T^m K \subset U$ for all $m \geq \overline m$.
 \end{lemma}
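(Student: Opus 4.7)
The strategy is to recognize $\mathrm{T}$ as a conformal automorphism of $\sn$ and then linearize it by stereographic projection from the repelling fixed point $a^*$, reducing the dynamics to a Euclidean similarity contracting toward the image of $b^*$.

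The ``onto'' assertion is immediate: $\tau_a$ and $\tau_b$ are continuous involutions of $\sn$, so $\mathrm{T}=\tau_b\tau_a$ is a homeomorphism of $\sn$. Since $a^*$ lies on the chord through $a$ and $b$, both reflections fix $a^*$, hence $\mathrm{T}(a^*)=a^*$; bijectivity then forces $\mathrm{T}(\sn_a)=\sn_a$. For the contractive statement, the intertwining identity (\ref{vE:g_a5}) yields $\tau_a=\vp_a\,\tau_o\,\vp_a$, exhibiting $\tau_a$ as a composition of the M\"obius automorphism $\vp_a$ of $\bbb^{n+1}$ and the antipodal isometry $\tau_o$ of $\sn$; in particular, $\tau_a$ (and likewise $\tau_b$) is conformal on $\sn$, and so is $\mathrm{T}$.

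Let $\sigma:\sn\setminus\{a^*\}\to\bbr^n$ be stereographic projection from $a^*$, and set $\tilde{\mathrm{T}}=\sigma\,\mathrm{T}\,\sigma^{-1}$. Then $\tilde{\mathrm{T}}$ is a conformal self-map of $\hat\bbr^n$ that fixes $\infty$, hence a M\"obius transformation fixing $\infty$, hence a Euclidean similarity. Since it also fixes $\tilde{b}^*:=\sigma(b^*)$, it takes the form
\[
\tilde{\mathrm{T}}(\tilde{z})=\tilde{b}^*+\mu\,O\,(\tilde{z}-\tilde{b}^*),\qquad \mu>0,\ O\in O(n).
\]
Because $\mathrm{T}$ leaves every circle $C_{x,a,b}$ through $a^*$ and $b^*$ invariant (as observed in the proof of Lemma \ref{lug6n}), and $\sigma$ sends every such circle to an affine line through $\tilde{b}^*$, the map $\tilde{\mathrm{T}}$ preserves every line through $\tilde{b}^*$. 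This forces $O$ to preserve every line through the origin of $\bbr^n$, hence $O=\pm I$. The monotone decrease of $|\mathrm{T}^j x - b^*|$ already recorded in Lemma \ref{lug6n} excludes $O=-I$ (which would oscillate the orbit across $\tilde{b}^*$), leaving $O=I$; the convergence $\mathrm{T}^j x\to b^*$ then forces $\mu\in(0,1)$, and a one-circle computation in the spirit of (\ref{sgvy1e})--(\ref{suu1e}) recovers $\mu=t(1-s)/[s(1-t)]$.

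With $\tilde{\mathrm{T}}(\tilde{z})-\tilde{b}^*=\mu(\tilde{z}-\tilde{b}^*)$, iteration gives $|\tilde{\mathrm{T}}^m\tilde{z}-\tilde{b}^*|=\mu^m|\tilde{z}-\tilde{b}^*|$. For any compact $K\subset\sn_a$ the image $\sigma(K)$ is bounded in $\bbr^n$, say by $R:=\sup_{\tilde{z}\in\sigma(K)}|\tilde{z}-\tilde{b}^*|$, so $\tilde{\mathrm{T}}^m\sigma(K)\subset\{|\tilde{z}-\tilde{b}^*|\le\mu^m R\}$. Pulling back by $\sigma^{-1}$ yields uniform convergence $\mathrm{T}^m K\to\{b^*\}$ on $\sn$, and hence the required containment $\mathrm{T}^m K\subset U$ for all sufficiently large $m$. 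I expect the most delicate point to be the similarity characterization of $\tilde{\mathrm{T}}$, in particular the exclusion of a nontrivial orthogonal factor; this step relies essentially on the invariant foliation of $\sn$ by the circles $C_{x,a,b}$ through $a^*$ and $b^*$.
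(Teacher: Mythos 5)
Your proof is correct, but it follows a genuinely different route from the paper. The paper's argument is soft: it takes the pointwise convergence $\mathrm{T}^j x \to b^*$ already established in Lemma \ref{lug6n}, upgrades it to the sets $K_\delta = \sn\setminus B(a^*,\delta)$ by continuity of $\mathrm{T}^{m}$ and a finite subcover of $K_\delta$, and then closes the argument with the monotonicity $\mathrm{T}^{m+1}K_{\delta}\subset \mathrm{T}^{m}K_{\delta}$ (a geometric observation on the invariant circles). You instead exploit the M\"obius structure: via (\ref{vE:g_a5}) each $\tau_a$ is the restriction to $\sn$ of a M\"obius transformation of the ball, so $\mathrm{T}=\tau_b\tau_a$ conjugates, under stereographic projection from the repelling fixed point $a^*$, to a Euclidean similarity of $\bbr^n$ fixing $\sigma(b^*)$ with ratio $\mu<1$; uniform convergence on compacts then drops out of the exact contraction $|\tilde{\mathrm{T}}^m\tilde z-\tilde b^*|=\mu^m|\tilde z-\tilde b^*|$. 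Your approach buys more than the lemma asks for: an explicit exponential rate $\mu^m$ with $\mu=t(1-s)/[s(1-t)]$, uniform on compacts, which would in principle quantify the convergence of the reconstructing series in Section 5; the paper's approach is shorter and stays within the elementary toolkit it has set up, consistent with its stated decision not to invoke the hyperbolic/M\"obius group machinery.

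Two minor points. First, the entire middle step identifying the orthogonal factor $O$ is superfluous: the norm identity $|\tilde{\mathrm{T}}^m\tilde z-\tilde b^*|=\mu^m|\tilde z-\tilde b^*|$ holds for \emph{any} $O\in O(n)$, so you need neither the invariant-line argument nor the exclusion of $O=-I$ (whose justification via ``monotone decrease of $|\mathrm{T}^j x-b^*|$'' is the shakiest link, since a similarity with $O=-I$ and $\mu<1$ also contracts distances to the fixed point monotonically; the clean way to see $O=I$ is that $\tau_a$ and $\tau_b$ each swap the two arcs of $C_{x,a,b}$ between $a^*$ and $b^*$, so $\mathrm{T}$ preserves each arc). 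Second, the step ``conformal self-map of the extended $\bbr^n$, hence M\"obius'' should be phrased more carefully for small $n$: the correct justification is that $\mathrm{T}$ is a composition of restrictions to $\sn$ of M\"obius transformations of $\bbr^{n+1}\cup\{\infty\}$ preserving $\sn$, and such restrictions correspond exactly to M\"obius transformations of $\bbr^{n}\cup\{\infty\}$ under stereographic projection; this is standard (Beardon) and your appeal to (\ref{vE:g_a5}) supplies exactly the needed factorization $\tau_a=\vp_a\tau_o\vp_a$. Neither point affects the validity of the conclusion.
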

\begin{proof} The first statement is obvious, because $\mathrm{T}a^*=a^*$ and $\mathrm{T}^{-1}a^*=\tau_a\tau_b a^*=a^*$.
The second statement follows by a standard argument for monotone pointwise convergence  on compacts.
In fact, it suffices to prove this statement for the sets $U$ and $K$  having  the form
\[
U=U_\e=B(b^*, \varepsilon), \qquad  K=K_{\delta}=\sn \setminus B(a^*,\delta),
\]
where $B(a^*,\varepsilon)$ and $B(b^*,\delta)$ are  geodesic balls in $\sn$ of sufficiently small radii.

The pointwise convergence yields that for any fixed $x_0 \in K_{\delta}$ there exists a number $m_{0}$ such
that $\mathrm{T}^{m_0} x_0 \in U_\e$. By continuity, the same is true for every $x$ in some neighborhood
$V_{x_0}$ of $x_0$. Thus, the compact $K_{\delta}$ is covered by open sets $V_x$,  $x \in K_{\delta}$, and
therefore we can cover $K_{\delta}$
by a finite family  $\{V_{x_1}, \ldots  V_{x_M}\}$. For each $x_i$, there is a number $m_i$ such that $\mathrm{T}^{m_i} x_i \in U_\e$.
Setting
$ \overline m=\max\{m_{1},...,m_{M}\}$, we have
$$
\mathrm{T}^{\overline m} K_{\delta} \subset U_\e.
$$
 A simple geometric consideration shows that the sequence  $\mathrm{T}^{m+1}K_{\delta}$ monotonically decreases, i.e.,  $\mathrm{T}^{m+1}K_{\delta} \subset \mathrm{T}^m K_{\delta}$. Hence $\mathrm{T}^m K_{\delta} \subset U_\e$  for all $m \geq \overline m$.
 \end{proof}

\subsection{ Uniform Convergence on Compact Subsets of the Punctured Sphere}

\begin{theorem} \label{L:uniformly} If $f \in C(\sn)$, then  the series (\ref{bbi}) converges to $f$ uniformly on compact subsets of the punctured sphere $\sn \setminus \{a^*\}$.
\end{theorem}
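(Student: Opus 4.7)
The plan is to reduce the claim to a uniform decay statement for $W^m f$ on compact subsets of $\sn \setminus \{a^*\}$. Indeed, by the iteration identity (\ref{E:f=sum}) the remainder of the series (\ref{bbi}) after $m$ terms equals $W^m f$, and by Theorem \ref{klut} pointwise convergence to $f$ is already known on $\sn \setminus \{a^*\}$. Thus it suffices to prove $\|W^m f\|_{C(K)} \to 0$ as $m \to \infty$ for every compact $K \subset \sn \setminus \{a^*\}$.

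Next I would invoke the closed form (\ref{sfr2c5}),
\[
(W^m f)(x) = \omega_m(x)\, f(\mathrm{T}^{m+1} x), \qquad \omega_m(x) = \prod\limits_{j=0}^{m} \rho(\mathrm{T}^j x),
\]
and bound $|f(\mathrm{T}^{m+1} x)| \le \|f\|_{C(\sn)}$, reducing the task to showing $\omega_m \to 0$ uniformly on $K$. For this, three ingredients will be combined. First, $\rho$ is continuous and hence bounded on the compact sphere $\sn$ by some $M<\infty$, because $|a-x| \ge 1-|a| > 0$ and $|b-\tau_a x| \ge 1-|b| > 0$ uniformly in $x \in \sn$. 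Second, (\ref{sgvf21}) yields $\rho(b^*) < 1$; fixing any $c \in (\rho(b^*),1)$ and using continuity of $\rho$, one obtains an open neighborhood $U$ of $b^*$ on which $\rho \le c$. Third, Lemma \ref{L:attracting} supplies an integer $\overline m = \overline m(K,U)$ with $\mathrm{T}^j K \subset U$ for every $j \ge \overline m$. Splitting the product at $j = \overline m$ then gives, for all $x \in K$ and $m \ge \overline m$,
\[
\omega_m(x) \le M^{\overline m}\cdot c^{\,m - \overline m + 1},
\]
which tends to $0$ geometrically and uniformly in $x \in K$.

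The main obstacle is that $\rho$ may well exceed $1$ near $a^*$ (as the paper's example with $|a|=|b|=1/2$ shows, where $\rho(a^*) = 9^{k-1}$), so without care the partial products in $\omega_m$ could a priori blow up. This is precisely where Lemma \ref{L:attracting}, furnishing a \emph{uniform} entry time $\overline m$ depending only on $K$, becomes indispensable: it permits me to lump the first $\overline m$ (potentially large) factors into the fixed constant $M^{\overline m}$, while the tail contributes the geometric decay $c^{m-\overline m + 1}$. Combining the bound on $\omega_m$ with $|f| \le \|f\|_{C(\sn)}$ then yields $\|W^m f\|_{C(K)} \to 0$, and the theorem follows.
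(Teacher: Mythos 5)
Your proposal is correct and follows essentially the same route as the paper: both express the remainder as $(W^m f)(x)=\omega_m(x)f(\mathrm{T}^{m+1}x)$, choose a neighborhood $U$ of $b^*$ where $\rho<\gamma<1$ via (\ref{suu1e}), and use the uniform entry time from Lemma \ref{L:attracting} to split the product into a bounded head and a geometrically decaying tail. The only cosmetic difference is that you bound the head by a global supremum $M^{\overline m}$ of $\rho$ on $\sn$, whereas the paper uses $\max_{x\in K_\delta}\prod_{j=0}^{\overline m}\rho(\mathrm{T}^j x)$; both are valid.
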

\begin{proof} Consider the remainder $(W^m f)(x)$ of the series (\ref{bbi}). By (\ref{sfr2c5}),
\[(W^m f)(x) =  \om_m(x)\,f (\mathrm{T}^{m+1} x), \qquad  \om_m(x)= \prod\limits_{j=0}^{m} \rho (\mathrm{T}^j x).\]
  Because $\rho(b^*) <1$ (see (\ref{suu1e})), for a fixed $\gam $ satisfying  $\rho(b^*) < \gam <1$, there is an open neighborhood $U \subset \sn \setminus \{a^*\}$ of the point $b^*$ such that $0< \rho(y)< \gam$ for all $y \in U$.
On the other hand, Lemma \ref{L:attracting} says
 that there exists $\overline m$ such that $\mathrm{T}^m K_{\delta} \subset U$ for $ m \geq \overline m$ and hence $0< \rho(\mathrm{T}^m x) < \gam$ for all $ x \in K_{\delta}$ and $ m \geq \overline m$. Thus
 $$
\om_m(x)  \leq  \gam^{m-\overline m}\, \max\limits_{x \in K_{\delta}} \prod\limits_{j=0}^{\overline m}\rho(\mathrm{T}^j x)
 $$
for all  $m \geq \overline m$ and  all $x \in K_{\delta}$. It follows that $\om_m(x) \to 0$  as $m \to \infty$ uniformly on $K_{\delta}$.
Since $|f(T^mx)| \leq \|f\|_{C(S^n)}$, we conclude that $W^m f \to 0$ uniformly on $K_{\delta}$. This gives the result.
\end{proof}

\subsection{$L^p$-Convergence}

\begin{lemma} \label{L:isometry}
The operators $W_a$,  $W_b$, $W=W_a W_b$, and $\tilde W=W_b W_a$ are isometries of the space $L^{p_0}(\sn)$ with $p_0=n/(k-1)$.
\end{lemma}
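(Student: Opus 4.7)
The plan is to verify isometry for $W_a$ directly from the definition, observe that the critical exponent $p_0 = n/(k-1)$ is precisely the one for which the weight $\rho_a^{p_0}$ equals the Jacobian-type factor appearing in Lemma \ref{lem312}, and then deduce the statements for $W_b$, $W$, and $\tilde W$ by symmetry and composition.

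First I would compute, for arbitrary $f \in L^{p_0}(\sn)$,
\[
\|W_a f\|_{p_0}^{p_0} = \int_{\sn} \rho_a(x)^{p_0}\, |f(\tau_a x)|^{p_0}\, dx
= \int_{\sn}\left(\frac{1-|a|^2}{|a-x|^2}\right)^{(k-1)p_0}\! |f(\tau_a x)|^{p_0}\, dx.
\]
The crucial algebraic observation is that $(k-1)p_0 = n$, so the weight factor becomes $\bigl((1-|a|^2)/|a-x|^2\bigr)^n$. This is exactly the density appearing in the change-of-variables formula (\ref{fffzas}) of Lemma \ref{lem312}.

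Next I would apply (\ref{fffzas}) with $f$ replaced by $|f|^{p_0}$ (which is in $L^1(\sn)$ when $f \in L^{p_0}(\sn)$) to obtain
\[
\int_{\sn}\left(\frac{1-|a|^2}{|a-x|^2}\right)^{n}\! |f(\tau_a x)|^{p_0}\, dx = \int_{\sn} |f(x)|^{p_0}\, dx.
\]
This gives $\|W_a f\|_{p_0} = \|f\|_{p_0}$, so $W_a$ is an isometry of $L^{p_0}(\sn)$. The identical argument with $b$ in place of $a$ shows that $W_b$ is an isometry. Since $W = W_a W_b$ and $\tilde W = W_b W_a$ are compositions of isometries, they are isometries as well.

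I do not anticipate any real obstacle here; the content of the lemma is essentially the observation that the exponent $p_0 = n/(k-1)$ is the unique value at which the multiplicative weight $\rho_a$ in the definition of $W_a$ is compatible (in its $p_0$-th power) with the density transforming the measure $dx$ under the reflection $\tau_a$. The only point requiring a brief justification is that (\ref{fffzas}) applies to the nonnegative function $|f|^{p_0} \in L^1(\sn)$, which is immediate from $f \in L^{p_0}(\sn)$.
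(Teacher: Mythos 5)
Your proof is correct and follows essentially the same route as the paper: both rest on the observation that $(k-1)p_0=n$, so that $\rho_a^{p_0}$ coincides with the density in (\ref{fffzas}), which is then applied to $|f|^{p_0}\in L^1(\sn)$; the cases of $W_b$, $W$, and $\tilde W$ follow by symmetry and composition exactly as in the paper. No gaps.
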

\begin{proof} The statement about $W_a$  follows from (\ref{fffzas}), which reads
$$
\int\limits_{\sn} \big( \rho_a(x) \big)^{p_0}f (\tau_a x) dx= \int\limits_{\sn} f(x) \,dx.
$$
The equality holds for any $f \in L^1(\sn)$ and therefore, if $f \in L^{p_0}(\sn)$, then, using   $|f(x)|^{p_0}$ instead of $f$, we obtain
$\|W_a f\|_{p_0}=\|f\|_{p_0}$. The statement for $W_b$ follows analogously. The operators $W$ and $\tilde W$ are also isometries, as the products of two isometries.
\end{proof}

\begin{theorem}\label{T:Lp} Let $f \in C(S^n)$, $p_0=n/(k-1)$.  The series (\ref{bbi}) and (\ref{bbi2}) converge to $f$  in the norm of $L^p(\sn)$ for any $1 \leq  p < p_0$.
The convergence to $f$ fails in any space $L^p(\sn)$ with $p_0 \leq p \leq \infty.$
\end{theorem}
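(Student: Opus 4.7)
The partial sum of order $m$ of the series (\ref{bbi}) differs from $f$ by the remainder $W^m f$ (see (\ref{E:f=sum})), so $L^p$-convergence to $f$ is equivalent to $\|W^m f\|_{L^p(\sn)}\to 0$. My plan is to combine the two tools already at hand: uniform decay of $W^m f$ on compacta in $\sn\setminus\{a^*\}$ (Theorem \ref{L:uniformly}) and the $L^{p_0}$-isometry property of $W$ (Lemma \ref{L:isometry}).

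For $1 \le p < p_0$, I would split $\sn=K_\delta\cup B(a^*,\delta)$ with $K_\delta=\sn\setminus B(a^*,\delta)$. On $K_\delta$, Theorem \ref{L:uniformly} immediately makes $\int_{K_\delta}|W^m f|^p\,dx$ vanish in the limit. On the small cap $B(a^*,\delta)$, H\"older's inequality with exponent $p_0/p>1$ combined with Lemma \ref{L:isometry} yields
\[\int_{B(a^*,\delta)} |W^m f|^p\,dx \le |B(a^*,\delta)|^{\,1-p/p_0}\,\|f\|_{L^{p_0}(\sn)}^{\,p}.\]
The exponent $1-p/p_0$ is positive, so the right-hand side is small uniformly in $m$ once $\delta$ is small; choosing $m$ large afterwards handles the $K_\delta$-piece, and two epsilons close the argument. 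The series (\ref{bbi2}) is treated symmetrically, with the roles of $a^*$ and $b^*$ exchanged and $W$ replaced by $\tilde W$.

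For $p_0\le p\le\infty$ I would exhibit failure on a suitable $f\in C(\sn)$. When $p=p_0$, Lemma \ref{L:isometry} keeps $\|W^m f\|_{L^{p_0}(\sn)}=\|f\|_{L^{p_0}(\sn)}$ bounded away from $0$ for any $f\not\equiv 0$. When $p_0<p<\infty$, the embedding $L^p(\sn)\hookrightarrow L^{p_0}(\sn)$ for the finite measure $dx$ reverses direction and gives $\|W^m f\|_{L^p(\sn)}\ge |\sn|^{-(1/p_0-1/p)}\|f\|_{L^{p_0}(\sn)}$, again uniformly in $m$. When $p=\infty$, I would pick $f\in C(\sn)$ with $f(a^*)\ne 0$; since $\mathrm{T}a^*=a^*$, formula (\ref{sfr2c5}) gives $(W^m f)(a^*)=\rho(a^*)^{m+1}f(a^*)$, and a parametric calculation along the same lines as (\ref{sgvy1e})--(\ref{suu1e}) yields
\[\rho(a^*)=\left[\frac{(1-t)s}{t(1-s)}\right]^{k-1}>1\qquad\text{for } 0<t<s<1,\]
so $|(W^m f)(a^*)|\to\infty$ and hence $\|W^m f\|_{L^\infty(\sn)}\to\infty$.

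The main obstacle is the potential blow-up of $W^m f$ near the repelling fixed point $a^*$, which already prevents uniform convergence on the full sphere. The conceptual trick is that Lemma \ref{L:isometry} supplies a single uniform ceiling on the $L^{p_0}$-mass of $W^m f$; this ceiling is then exchanged---via H\"older's inequality in opposite directions for $p<p_0$ and for $p>p_0$---for the sharp dichotomy in the statement.
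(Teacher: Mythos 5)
Your proposal is correct and follows essentially the same route as the paper: the same decomposition of $\sn$ into $B(a^*,\delta)$ and $K_\delta$, the same H\"older-plus-$L^{p_0}$-isometry bound on the cap (your exponent $1-p/p_0$ is in fact the correct one; the paper's $r/(r-1)$ is a slip), the same appeal to Theorem \ref{L:uniformly} on $K_\delta$, and the same reverse H\"older argument for failure when $p\ge p_0$. Your separate pointwise blow-up computation at $a^*$ for $p=\infty$ is a correct but unnecessary extra, since the reverse H\"older inequality $\|f\|_{p_0}=\|W^mf\|_{p_0}\le c\,\|W^mf\|_{p}$ already covers all $p_0\le p\le\infty$ at once.
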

\begin{proof} It is clear that  $f$  belongs to $L^p(\sn)$ for any $1 \leq p \leq \infty$.  Fix $\delta >0$ and consider the  function
$W^m f=(W_a W_b)^m f$.  Suppose that $p<p_0$ and set $r=p_0/p>1$. We write
\bea \label{E:I12}
\|W^mf \|^p_p &=&\int\limits_{B(a^*,\delta)}|(W^m f)(x)|^p \,dx + \int\limits_{K_{\delta}}|(W^mf)(x)|^p dx \nonumber\\
\label{E:I12} &=&I_1(m,\delta)+I_2(m,\delta), \eea
where, as above, $K_{\delta}=\sn \setminus B(a^*,\delta)$. By H\"older's inequality,
$$
I_1(m,\delta) \leq  \Bigg( \int\limits_{B(a^*,\delta)} \Big ( |(W^mf)(x)|^p  \Big)^{r}dx  \Bigg)^{1/r}
\Bigg( \int\limits_{B(a^*,\delta)} dx \Bigg)^{r/(r-1)}.
$$
Owing to Lemma \ref{L:isometry}, the operator $W^m$ preserves the $L^{p_0}$-norm, and therefore
$$
\begin{aligned}
&\Bigg( \int\limits_{B(a^*,\delta)} \Big (|(W^mf)(x)|^p \Big)^r dx\Bigg)^{1/r} = \Bigg( \int\limits_{B(a^*,\delta)} |(W^mf)(x)|^{p_0}dx \Bigg)^{1/r} \\
&\leq  \| W^m f \|_{p_0}^{p_0/r}=\|f\|_{p_0}^p.
\end{aligned}
$$
Hence
\begin{equation}\label{E:I1}
I_1(m,\delta) \leq A(\delta)^{r/(r-1)} \|f\|_{p_0}^p,
\end{equation}
where $A(\delta) $ is the $n$-dimensional surface area of the geodesic ball $B(a^*, \delta)$.
For the second integral in (\ref{E:I12}) we have
\begin{equation}\label{E:I2}
I_2(m,\delta) <  \sigma_n \sup\limits_{x \in K_{\delta}}|(W^m f)(x)|^p,
\end{equation}
where $\sigma_n$ is the area of the unit sphere $\sn$.

Now we fix sufficiently small $\varepsilon >0$. Using (\ref{E:I1}), let us  choose $\delta > 0$ so  that $I_1(m,\delta) < \e/2$ for all $m\ge 0$.
 By Theorem \ref{L:uniformly}, the inequality (\ref{E:I2}) implies that
 there exists
 $\tilde m =\tilde m (\del)$ such that $I_2(m,\delta) < \e/2$ for all $m \ge \tilde m$. Hence, by (\ref{E:I12}),
$\|W^mf \|^p_{p} < \varepsilon$ for $m \geq \tilde m$, and therefore $W^mf$ tends to $0$  as $ m \to \infty$ in the $L^p$-norm. The latter gives the desired convergence of the series (\ref{bbi}).

  On the other hand, if $p > p_0$, then,  by  H\"older's inequality, $\|f\|_{p_0} =\|W^m f\|_{p_0} \leq c \,\|W^mf\|_{p}$, $c=\const >0$.
 It follows that the $L^p$-norm of the remainder $W^m f$ of the series does not tend to $0$ as $ m \to \infty$,  unless $f=0$.

 The proof for $\tilde W f$ is similar.
\end{proof}

\begin{remark}
 As we can see, the iterative method  in terms of the  series (\ref{bbi}) and (\ref{bbi2})  does not provide  uniformly convergent reconstruction of continuous functions. The  reconstruction is guaranteed only in the  $L^p$-norm with $1 \leq  p < p_0=n/(k-1)$. For instance, in the case of the hyperplane sections,   when  $k=n$ and $p_0=1+1/(n-1)$, the $L^2$-convergence fails because $p_0$ does not exceed $2$. The  less  the dimension  $k$ is, the greater  exponent $p$  can be chosen.
 The case $p=1$ works for all $1<k\le n$.
\end{remark}

\section{ Proof of Theorem \ref {kdndt} }\label {jsbsr}

 We recall that $a\in \bbb^{n+1}$, $s_a=\sqrt{1-|a|^2}$,  and $\vp_a$  is an automorphism (\ref{E:g_a}).
The following lemma  allows us to exploit the language of Stiefel manifolds when dealing with affine planes.

\begin{lemma} \label{lemuupz}  Let  $1\le k \le n$.
The  map $\vp_a$ extends  as a bijection from  ${\rm Gr}_a (n+1, k)$ onto   ${\rm Gr}_o (n+1, k)$. Specifically, if $\t  \in {\rm Gr}_a (n+1, k)$ is defined by
\be\label{kuytrs} \t=\{ x\in \bbr^{n+1} :\, \xi'x =\xi'a\}, \qquad \xi\in  \stnk,\ee
 then $\z\equiv \vp_a \t  \in {\rm Gr}_o (n+1, k)$ has the form
  \be\label{kuy44trs} \z=\{ y\in \bbr^{n+1} :\, \eta'y =0\}, \qquad \eta\in  \stnk,\ee
where
\be\label {obbmkas} \eta = -(\mathrm {A} \xi) \, \a^{-1/2}, \qquad \mathrm {A}=s_a \mathrm {P}_a  +  \mathrm {Q}_a, \qquad \a= (\mathrm {A} \xi)'(\mathrm {A} \xi).\ee

Conversely, if $\z \in {\rm Gr}_o (n+1, k)$ is defined by (\ref{kuy44trs}), then $\t\equiv\vp_a \z$ has the form (\ref{kuytrs}) with
\be\label {obbmkb} \xi = (\mathrm {A}_1\eta) \, \b^{-1/2}, \qquad \mathrm {A}_1=\mathrm {P}_a  + s_a  \mathrm {Q}_a, \qquad \, \b=(\mathrm {A}_1\eta)'(\mathrm {A}_1\eta).
 \ee
\end{lemma}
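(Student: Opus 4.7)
I would prove this lemma by a direct linear-algebra computation based on the explicit formula (\ref{E:g_a}) for $\vp_a$, the involution property (\ref{Evf}), and the identities $\mathrm{P}_a a = a$, $\mathrm{Q}_a a = 0$, $\mathrm{P}_a \mathrm{Q}_a = 0$, $\mathrm{P}_a^2=\mathrm{P}_a$, $\mathrm{Q}_a^2=\mathrm{Q}_a$, $1-|a|^2 = s_a^2$. The organizing observation is the algebraic identity
\[
A_1 A = A A_1 = s_a \mathrm{I}_{n+1},\qquad A = s_a \mathrm{P}_a + \mathrm{Q}_a,\qquad A_1 = \mathrm{P}_a + s_a \mathrm{Q}_a,
\]
obtained by expanding the product. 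Because $A$ and $A_1$ have spectra $\{s_a,1\}$, both are symmetric and positive definite, so $A\xi$ has $n+1-k$ linearly independent columns, and the matrices $\alpha = (A\xi)'(A\xi)$ and $\beta = (A_1\eta)'(A_1\eta)$ are positive definite, making $\alpha^{\pm 1/2}$ and $\beta^{\pm 1/2}$ well defined.

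For the forward direction, take $x\in\t$, so $\xi'(x-a)=0$, and set $y=\vp_a x$; by the involution property, $x=\vp_a y$. Substituting (\ref{E:g_a}) and regrouping,
\[
x-a \;=\; \vp_a y - a \;=\; \frac{-\mathrm{P}_a y - s_a\mathrm{Q}_a y + a(y\cdot a)}{1 - y\cdot a}.
\]
The key cancellation is $a(y\cdot a)=|a|^2\mathrm{P}_a y$, which combined with $1-|a|^2 = s_a^2$ collapses the numerator to $-s_a(s_a\mathrm{P}_a y + \mathrm{Q}_a y) = -s_a A y$. Hence $\xi'(x-a)=0$ is equivalent to $(A\xi)'y=0$ (using the symmetry of $A$). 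This describes a $k$-dimensional linear subspace $\z$ through the origin, and the normalization $\eta=-(A\xi)\alpha^{-1/2}$ satisfies $\eta'\eta = \alpha^{-1/2}\alpha\alpha^{-1/2} = \mathrm{I}_{n+1-k}$, so $\eta\in\stnk$ and defines $\z\in{\rm Gr}_o(n+1,k)$, yielding (\ref{obbmkas}).

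For the converse, decompose $x=a+(x-a)$ and use $\mathrm{P}_a a=a$, $\mathrm{Q}_a a=0$ to obtain
\[
y \;=\; \vp_a x \;=\; \frac{-A_1(x-a)}{1 - x\cdot a},
\]
so that $\eta'y=0$ is equivalent to $(A_1\eta)'(x-a)=0$, and the normalization $\xi=(A_1\eta)\beta^{-1/2}$ produces a frame in $\stnk$ defining $\t\in{\rm Gr}_a(n+1,k)$. This is (\ref{obbmkb}). To verify that the two constructions are mutually inverse, apply $A_1$ to $\eta = -(A\xi)\alpha^{-1/2}$: the identity $A_1 A = s_a\mathrm{I}_{n+1}$ gives $A_1\eta = -s_a\xi\alpha^{-1/2}$, whence $\beta = s_a^2\alpha^{-1}$, $\beta^{-1/2} = s_a^{-1}\alpha^{1/2}$, and $(A_1\eta)\beta^{-1/2} = -\xi$. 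Since $\xi$ and $-\xi$ parameterize the same element of ${\rm Gr}_a(n+1,k)$, the round trip is the identity on the Grassmannian; the opposite composition is handled symmetrically.

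The main technical obstacle is the numerator cancellation in $\vp_a y - a$; once one spots that $a(y\cdot a) = |a|^2\mathrm{P}_a y$ and $1-|a|^2 = s_a^2$, the rest reduces to careful bookkeeping with the orthogonal projections $\mathrm{P}_a$ and $\mathrm{Q}_a$, and the bijection follows cleanly from the single identity $A_1 A = s_a\mathrm{I}_{n+1}$.
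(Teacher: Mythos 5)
Your proof is correct and follows essentially the same route as the paper: both arguments expand $\vp_a y-a$ (resp.\ $\eta'\vp_a x$) using the explicit formula (\ref{E:g_a}) and the projections $\mathrm{P}_a,\mathrm{Q}_a$, reduce the plane equations to $(\mathrm{A}\xi)'y=0$ and $(\mathrm{A}_1\eta)'(x-a)=0$, and normalize the resulting frames exactly as in (\ref{obbmkas})--(\ref{obbmkb}). Your explicit identity $\mathrm{A}_1\mathrm{A}=s_a\mathrm{I}_{n+1}$ and the resulting round-trip check $(\mathrm{A}_1\eta)\,\b^{-1/2}=-\xi$ is a small bonus the paper leaves implicit (it follows there from $\vp_a$ being an involution), and the sign discrepancy with the paper's (\ref{irrr}) is immaterial since $\pm\xi$ determine the same plane.
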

\begin{proof} Let $\t\in {\rm Gr}_a (n+1, k)$ be defined by  (\ref{kuytrs}).
 Then
\[
\z\equiv\vp_a \t=\{ y \in \bbr^{n+1} :\, \xi'(\vp_a y -a)=0 \}.\]
By (\ref{E:g_a}),
\be\label {irrr} \vp_a y -a=\frac{s_a \mathrm {A}y}{1-y\cdot  a}.\ee
 Hence $ \z=\{ y \in \bbr^{n+1} :\, (\mathrm {A}\xi)'y =0 \}$.
Now  (\ref{kuy44trs}) follows if we
 represent the $(n+1)\times (n+1-k)$ matrix  $\mathrm {A} \xi$  in the polar form
 \be\label{lzxg}
 \mathrm {A} \xi=\eta \, \a^{1/2}, \quad \, \a= (\mathrm {A} \xi)'(\mathrm {A} \xi), \quad \eta = (\mathrm {A} \xi) \, \a^{-1/2};\ee
 see, e.g., \cite[pp. 66, 591]{Mu}.

Conversely, let  $\z\in {\rm Gr}_o (n+1, k)$ be defined by  (\ref{kuy44trs}).
Then \[\t\equiv\vp_a \z=\{x \in \bbr^{n+1} :\, \eta' \vp_a x =0 \}.\]
By (\ref{E:g_a}), the equality  $\eta' \vp_a x =0$ is equivalent to
\be\label {lanrtb}
(\mathrm {P}_a \eta + s_a  \mathrm {Q}_a\eta)'x=\eta'a \quad \text{\rm or}\quad  (\mathrm {A}_1\eta)'x=\eta'a .\ee
We write  $\mathrm {A}_1\eta$ in the  form $\mathrm {A}_1\eta =\xi \, \b^{1/2}$ with  $\, \b=(\mathrm {A}_1\eta)'(\mathrm {A}_1\eta)$ and $\xi = (\mathrm {A}_1\eta) \, \b^{-1/2} \in   \stnk$. Then
(\ref{lanrtb}) yields $\xi'x= \, \b^{-1/2}\eta'a$. To complete the proof, it remains to note that $\, \b^{-1/2}\eta'a=\xi'a$. Indeed,
\bea
\xi'a&=&\, \b^{-1/2}(\mathrm {A}_1\eta)'a=\, \b^{-1/2}(\mathrm {P}_a \eta + s_a  \mathrm {Q}_a\eta)'a\nonumber\\
&=& \, \b^{-1/2}(\eta' (\mathrm {P}_a a  +s_a  \eta'\mathrm {Q}_a a)= \, \b^{-1/2}\eta' a.\nonumber\eea
\end{proof}

\noindent {\bf Proof of the Theorem.}
The case $k=1$  is almost obvious; cf. Remark \ref{klut}.   Assuming $1<k\le n$, let $\t  \in {\rm Gr}_a (n+1, k)$ have the form
(\ref{kuytrs})  and write
 \[(F_a f)(\t)\equiv (F_a f)(\xi)\! = \!\!\intl_{\{x\in \sn: \xi' (x-a)=0\}} \!\!\!f(x)\, d\sig (x), \quad \xi\in  \stnk.\]
  We make use of the standard approximation machinery.  Given  a sufficiently small  $\e >0$, let
\be\label{kuyb}
 (F_{a, \e} f)(\xi)=\intl_{\sn} f(x)\, \om_\e (\xi'(x-a))\, dx,\ee
where $\om_\e$ is a smooth bump function supported on the ball  in $\bbr^{n+1-k}$ of radius $\e$ with center at the origin, so that
$\lim\limits_{\e\to 0} \int_{|t|<\e}\om_\e (t)\,g(t)\,dx=g(o)$
 for any function $g$ which is continuous in a neighborhood of the origin.

STEP I. Let us show that
\be\label{kuyb1}
 \lim\limits_{\e\to 0} (F_{a, \e} f)(\xi)=(1-|\xi' a|^2)^{-1/2}(F_{a} f)(\xi).\ee

We pass to bispherical coordinates   (see, e.g., \cite [p. 31]{Ru15})
\be\label{niys} x= \left[\begin{array} {c}
\vp \, \sin \th \\ \psi\, \cos \,\th \end{array} \right], \quad \vp \in \sn \cap \xi^\perp, \quad \psi \in \sn \cap \{\xi\}, \quad 0 \!\le\!\theta \!\le\! \pi/2,\ee
\[
 \qquad  dx=
 \sin^{k-1}\theta \, \cos^{n-k}\theta \, d\theta d\vp d\psi,\]
and set $s=\cos\, \th$.
This gives
\bea
&&(F_{a, \e} f)(\xi)= \intl_{0}^1 s^{n-k} (1-s^2)^{(k-2)/2} ds \intl_{\sn \cap \xi^\perp} d\vp \nonumber\\
&&\times \intl_{\sn \cap \{\xi\}}
f \left(\left[\begin{array} {c}
\vp \, \sqrt{1-s^2} \\ s\psi\ \end{array} \right] \right )\, \om_\e (s\psi -\xi'a)\, d\psi\nonumber\\
\label{Riesz} &&= \intl_{\bbr^{n-k+1}} H(y) \, \om_\e (y - \xi'a) \,dy,  \eea
where
\[H(y)=(1\!-\!|y|^2)^{(k-2)/2} \intl_{\sn \cap \xi^\perp} f \left(\left[\begin{array} {c}
\vp \, \sqrt{1\!-\!|y|^2} \\ y\end{array} \right] \right )d\vp\]
if $|y|\le 1$ and $H(y)=0$, otherwise. Passing to the limit, we obtain
\[ \lim\limits_{\e\to 0} (F_{a, \e} f)(\xi)=
 H(\xi'a),\]
 where
\be\label{ryt}  H(\xi'a)=(1\!-\!|\xi'a|^2)^{(k-2)/2} \intl_{\sn \cap \xi^\perp} f \left(\left[\begin{array} {c}
\vp \, \sqrt{1\!-\!|\xi'a|^2} \\ \xi'a\end{array} \right] \right )d\vp. \ee
 If the argument of $f$  is denoted by $x$,  then $x -a$ lies in the subspace perpendicular to $\xi$.
Further, the integration in (\ref{ryt}) is performed over the $(k-1)$-dimensional sphere of radius $\sqrt{1\!-\!|\xi'a|^2}$. Switching to the surface area measure, we can write (\ref{ryt}) as
\[
 H(\xi'a)=(1\!-\!|\xi'a|^2)^{-1/2} \intl_{\{x \in \sn: \,\xi'(x -a)=0\}} f(x)\, d\sig (x),\]
as desired.

STEP II. Let us obtain an alternative expression for the limit
(\ref{kuyb1}), now in terms of the automorphism $\vp_a$. By Lemma \ref{lem31},
\[
 (F_{a, \e} f)(\xi)= s_a^n \intl_{\sn} \frac{(f\circ \vp_a)(y)}
{(1-  a \cdot y)^n}\, \om_\e (\xi'[\vp_a y-a])\, dy,\]
where
\[
\xi'[\vp_a y-a]= -\frac{s_a\xi'\mathrm {A}y}{1-  a \cdot y}=
-\frac{s_a (\mathrm {A}\xi)'y}{1-  a \cdot y}, \qquad \mathrm {A}=s_a \mathrm {P}_a +  \mathrm {Q}_a;\]
see (\ref{irrr}). Denote
\[ \tilde f (y)= s_a^n \frac{(f\circ \vp_a)(y)}
{(1-  a \cdot y)^n}.\]
Then
\[
 (F_{a, \e} f)(\xi)= \intl_{\sn} \tilde f (y)\, \om_\e \left(\frac{s_a(\mathrm {A} \xi)'y}{1-a\cdot  y}\right ) dy.\]
As in (\ref{lzxg}),  the polar decomposition yields
\be\label {jnlli} \mathrm {A} \xi=\eta\, \a^{1/2}, \quad \, \a= (\mathrm {A} \xi)'(\mathrm {A} \xi), \quad \eta = (\mathrm {A} \xi)  \, \a^{-1/2}.\ee

Then we pass to bispherical coordinates   (cf. (\ref{niys}))
\[ y= \left[\begin{array} {c}
\vp \, \sin \th \\ \psi\, \cos \,\th \end{array} \right], \quad   \vp \in \sn \cap \eta^\perp, \quad \psi \in \sn \cap \{\eta\}, \quad 0 \!\le\!\theta \!\le\! \pi/2,,\]
\[
 \qquad  dy=
 \sin^{k-1}\theta \, \cos^{n-k}\theta \, d\theta d\vp d\psi,\]
and set $s=\cos\, \th$.
This gives
\bea
&&(F_{a, \e} f)(\xi)= \intl_{0}^1 s^{n-k} (1-s^2)^{(k-2)/2} ds \intl_{\sn \cap \eta^\perp} d\vp \nonumber\\
&&\times \intl_{\sn \cap\{\eta\}}
\tilde f \left(\left[\begin{array} {c}
\sqrt{1-s^2} \vp   \\ s\psi\ \end{array} \right] \right )\, \om_\e \left(\frac{s_a\, \a^{1/2} s\psi}{1-a \cdot  (\sqrt{1-s^2}\, \vp+s\psi)}\right ) \, d\psi,\nonumber \eea

or (set $z=s\psi \in \{\eta\} \sim \bbr^{n+1-k}$, $|z|<1$)
\bea
&&(F_{a, \e} f)(\xi)= \intl_{|z|<1} (1-|z|^2)^{(k-2)/2} dz  \nonumber\\
&&\times
\intl_{\sn \cap \eta^\perp} \tilde f \left(\left[\begin{array} {c}
 \sqrt{1-|z|^2} \vp  \\ z\ \end{array} \right] \right )\, \om_\e \left(\frac{s_a\, \a^{1/2} z}{1-a \cdot (\sqrt{1-|z|^2}\, \vp+z)}\right ) \, d\vp,\nonumber \eea
We set
\be\label{kabe}
t\equiv t(z)=\frac{s_a\, \a^{1/2} z}{1-a \cdot (\sqrt{1-|z|^2}\, \vp+z)}=\frac{\Lam z}{1-h(z)},\ee
\[ \Lam =s_a\, \a^{1/2}, \qquad h(z)=a \cdot (\sqrt{1-|z|^2}\, \vp+z),\]
 so that $t=o$ if and only if $z=o$, where $o$ is the origin in the corresponding space. Further, we write (\ref{kabe}) as
\[ \Phi (t,z)\equiv \Lam z -t+t h(z)=0\]
and denote $m=n+1-k$.
Because the $m\times m$ matrix $(\partial \Phi_i/\partial z_j)(o,o)=\Lam$ is invertible, there exists an inverse function $z=z(t)$, which is  well-defined and differentiable in a  small neighborhood of $t=0$. Hence, for sufficiently small $\e>0$,
\bea
&&(F_{a, \e} f)(\xi)= \intl_{|t|<\e} (1-|z(t)|^2)^{(k-2)/2} \om_\e (t) \, |\det (z'(t))|\,dt  \nonumber\\
&&\times
\intl_{\sn \cap \eta^\perp} \tilde f \left(\left[\begin{array} {c}
 \sqrt{1-|z(t)|^2} \vp  \\ z(t)\ \end{array} \right] \right )\, \, d\vp,\nonumber \eea
where
\[z' (t)=- \left[\frac{\partial \Phi (t,z)}{ \partial z}\right ]^{-1} \, \frac{\partial \Phi (t,z)}{ \partial t}, \qquad z=z(t).\]
 Passing to the limit, we obtain
\[
\lim\limits_{\e\to 0} (F_{a, \e} f)(\xi)\!=\!|\det (z'(o))| \intl_{\sn \cap \eta^\perp} \tilde f (\vp)\, \, d\vp,\]
where
\[ z'(o)= (1-a\cdot\vp) \Lam^{-1}= s_a^{-1}(1-a\cdot\vp)\, \a^{-1/2}, \qquad \, \a= (\mathrm {A} \xi)'(\mathrm {A} \xi).\]
This gives
\[
\lim\limits_{\e\to 0} (F_{a, \e} f)(\xi)\!=\!\frac{s_a^{k-1}}{\det (\a)^{1/2}} \intl_{\sn \cap \eta^\perp} \frac{(f\circ \vp_a)(\vp)}
{(1-  a \cdot \vp)^{k-1}}\, d\vp.\]
Note that
\bea \, \a&=& (\mathrm {A} \xi)'(\mathrm {A} \xi)= (s_a \mathrm {P}_a \xi+  \mathrm {Q}_a\xi)'(s_a \mathrm {P}_a \xi+  \mathrm {Q}_a\xi)\nonumber\\
&=&I_{n+1-k}- \xi'aa'\xi,\nonumber\eea
and therefore $\det (\a)=\det (I_{n+1-k}- \xi'aa'\xi)$.
The last expression can be transformed by making use  of
  the known fact  from Algebra (see, e.g., \cite [Theorem A3.5]{Mu}). Specifically,
  if $\mathrm {U}$ and $\mathrm {V}$ are  $m\times n$ and $n \times m$ matrices, respectively, then
\be\label{aooi4}
\det (\mathrm {I}_{m} +\mathrm {U}\mathrm {V}) =\det (\mathrm {I}_{n} +\mathrm {V}\mathrm {U}).\ee
By this formula, $\det (\a)=1-(a'\xi)(\xi'a)=1-|\xi'a|^2$. Thus, changing  notation, as in (\ref{vt1de}), we have
\be\label{aooi41}
\lim\limits_{\e\to 0} (F_{a, \e} f)(\xi)=(1-|\xi'a|^2)^{-1/2} \intl_{\sn \cap \eta^\perp} (M_a f)(y)\, d\sig (y), \ee
where $\eta = (\mathrm {A} \xi)  \, \a^{-1/2}$; cf. (\ref{jnlli}).

STEP III. Comparing (\ref{kuyb1}) with (\ref{aooi41}) and  switching backward to the Grassmannian language (use Lemma \ref{lemuupz}), we obtain the statement of the theorem.

\bibliographystyle{amsplain}

\end{document}